\newtheorem{theorem}{Theorem}
\theoremstyle{plain}
\newtheorem{conjecture}{Conjecture}
\newtheorem{definition}{Definition}
\newtheorem{lemma}{Lemma}
\newtheorem{proposition}{Proposition}
\newtheorem{remark}{Remark}
\numberwithin{equation}{section}
\begin{document}
\title[Bottom of spectrum of Kahler manifolds]{Bottom of spectrum of Kahler manifolds with strongly pseudoconvex boundary}
\author{Song-Ying Li}
\address{Department of Mathematics, University of California, Irvine, CA 92697}
\email{sli@math.uci.edu}
\author{Xiaodong Wang}
\address{Department of Mathematics, Michigan State University, East Lansing, MI 48824}
\email{xwang@math.msu.edu}
\thanks{The second author was partially supported by NSF grant DMS-0905904.}
\maketitle

\section{\bigskip Introduction}

The study of the spectrum of the Laplace operator has been a very active
subject in Riemannian geometry. In the compact case, the spectrum consists of
eigenvalues. On a noncompact complete Riemannian manifold $\left(
M^{n},g\right)  $, the $L^{2}$-spectrum is much more complicated. For many
questions, it suffices to study the bottom of the spectrum which can be
characterized
\[
\lambda_{0}\left(  M,g\right)  =\inf_{u\in C_{c}^{1}\left(  M\right)  }%
\frac{\int_{M}\left\vert \nabla u\right\vert ^{2}}{\int_{M}u^{2}}.
\]
If $\mathrm{Ric}\geq0$, then $\lambda_{0}=0$ by Cheng's eigenvalue comparison
theorem. If Ricci has a negative lower bound, Cheng \cite{Ch} proved the
following theorem.

\begin{theorem}
Suppose $\left(  M^{n},g\right)  $ is a noncompact complete Riemannian
manifold with $\mathrm{Ric}\geq-\left(  n-1\right)  $, we have $\lambda
_{0}\leq\left(  n-1\right)  ^{2}/4$.
\end{theorem}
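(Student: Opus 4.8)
The plan is to combine a volume growth estimate coming from the Ricci lower bound with a test-function computation in the Rayleigh quotient. Fix $p\in M$, set $r(x)=d(p,x)$, and let $V(\rho)=\mathrm{Vol}\,B(p,\rho)$. The two steps are: (i) show that $\mathrm{Ric}\ge-(n-1)$ forces the exponential volume growth bound $V(\rho)\le Ce^{(n-1)\rho}$; and (ii) feed radial cut-off functions that decay like $e^{-sr}$ into the variational characterization of $\lambda_0$ to prove $\lambda_0\le s^2$ for every $s>(n-1)/2$, whence $\lambda_0\le(n-1)^2/4$ upon letting $s\downarrow(n-1)/2$. Conceptually, step (ii) realizes the general principle that $\lambda_0$ is controlled by the volume entropy. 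Throughout I use that the infimum defining $\lambda_0$ is unchanged if $C_c^1(M)$ is enlarged to compactly supported Lipschitz functions.

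For step (i), I would invoke Bishop--Gromov volume comparison. Writing the hypothesis as $\mathrm{Ric}\ge(n-1)(-1)$ and comparing with the space form of constant curvature $-1$, the ratio $\rho\mapsto V(\rho)/\bar V(\rho)$ is non-increasing, where $\bar V(\rho)=\omega_{n-1}\int_0^\rho\sinh^{n-1}t\,dt$ is the volume of the ball of radius $\rho$ in $\mathbb{H}^n$. Since this ratio tends to $1$ as $\rho\to0$, we obtain $V(\rho)\le\bar V(\rho)$, and the elementary bound $\sinh^{n-1}t\le Ce^{(n-1)t}$ then gives $V(\rho)\le Ce^{(n-1)\rho}$ for all $\rho\ge0$.

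For step (ii), I would use the coarea formula to write, for a radial function $u=f(r)$,
$$\int_M|\nabla u|^2=\int_0^\infty f'(\rho)^2\,\mathcal A(\rho)\,d\rho,\qquad\int_M u^2=\int_0^\infty f(\rho)^2\,\mathcal A(\rho)\,d\rho,$$
where $\mathcal A(\rho)=\mathcal H^{n-1}(r^{-1}(\rho))$ and $V(\rho)=\int_0^\rho\mathcal A$. Fix $s>(n-1)/2$, so that $2s>n-1$; then step (i) guarantees that $I:=\int_0^\infty e^{-2s\rho}\mathcal A(\rho)\,d\rho$ is finite and positive. Let $f_R$ equal $e^{-s\rho}$ on $[0,R]$ and interpolate linearly down to $0$ on $[R,R+1]$, so that $u_R=f_R(r)$ is compactly supported and Lipschitz. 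The denominator converges to $I$, and the principal part of the numerator converges to $s^2I$; the only additional contribution is from the transition annulus, which I would bound by $\int_R^{R+1}e^{-2sR}\mathcal A\le e^{-2sR}V(R+1)\le Ce^{(n-1-2s)R}\to0$. Hence the Rayleigh quotient of $u_R$ tends to $s^2$, giving $\lambda_0\le s^2$, and letting $s\downarrow(n-1)/2$ finishes the argument.

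The step demanding the most care is the cut-off estimate in (ii): Bishop--Gromov supplies only an \emph{upper} bound on $\mathcal A$, so one must choose the decay rate $s$ strictly larger than $(n-1)/2$ in order to make the annular error decay exponentially against the fixed positive denominator $I$, and only afterwards recover the sharp constant by the limiting argument $s\downarrow(n-1)/2$. By contrast, the cut locus of $p$ poses no genuine difficulty, since both the coarea formula and the Bishop--Gromov inequality remain valid across it.
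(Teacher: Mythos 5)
Your proof is correct, but it follows a genuinely different route from the one the paper has in mind. The paper does not prove this statement at all: it is quoted as Cheng's theorem, and the intended argument is Cheng's eigenvalue comparison theorem for geodesic balls --- under $\mathrm{Ric}\ge -(n-1)$ one has $\lambda_1^{D}\left(B(p,R)\right)\le\lambda_1^{D}\left(B_{\mathbb{H}^n}(R)\right)$ for every $R$, where $\lambda_1^{D}$ denotes the first Dirichlet eigenvalue, and since Dirichlet eigenfunctions of balls are admissible test functions, letting $R\to\infty$ gives $\lambda_0(M)\le\lim_{R\to\infty}\lambda_1^{D}\left(B_{\mathbb{H}^n}(R)\right)=(n-1)^2/4$. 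Your argument is instead the Brooks-style volume-entropy route: the Ricci bound enters only through Bishop--Gromov, yielding $V(\rho)\le Ce^{(n-1)\rho}$, after which the exponentially decaying radial test functions give $\lambda_0\le s^2$ for every $s>(n-1)/2$. The trade-off is as follows. Cheng's comparison is a stronger local statement (it bounds the Dirichlet spectrum of every ball and admits a rigidity analysis), but it requires the Laplacian comparison for $r$ and an analysis of the model eigenfunctions. Your route is more elementary --- only the volume comparison and the coarea formula are used --- and it proves the more general fact that $\lambda_0\le h^2/4$ for any complete manifold, where $h=\limsup_{\rho\to\infty}\rho^{-1}\log V(\rho)$ is the volume entropy; the curvature hypothesis is consumed entirely in bounding $h\le n-1$. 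Your treatment of the delicate points is also accurate: the finiteness and positivity of $I$ (which follows from the growth bound by an integration by parts against $V$), the annular error bound $e^{-2sR}V(R+1)\le Ce^{(n-1-2s)R}\to 0$ made possible by taking $s$ strictly above $(n-1)/2$ before passing to the limit, and the observation that the cut locus is harmless since $|\nabla r|=1$ almost everywhere and both the coarea formula and Bishop--Gromov hold globally.
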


The estimate is sharp since the spectrum is the ray $[\left(  n-1\right)
^{2}/4,+\infty)$ for the hyperbolic space $\mathbb{H}^{n}$.

For K\"{a}hler manifolds, this estimate can be improved. On a K\"{a}hler
manifold $\left(  M,g\right)  $ of complex dimension $n$, where $g$ is the
Riemannian metric, let $\omega=g\left(  J\cdot,\cdot\right)  $ be the
K\"{a}hler form. In local holomorphic coordinates $z_{1},\cdots,z_{n}$, we
have%
\[
\omega=\sqrt{-1}g_{i\overline{j}}dz_{i}\wedge d\overline{z}_{j}.
\]
The Laplace operator on functions is given by the formula%
\[
\Delta f=2g^{i\overline{j}}\frac{\partial^{2}f}{\partial z_{i}\partial
\overline{z}_{j}}.
\]
It should be noted that we use different normalization here from ones appeared
in \cite{LW2,M,LT,Li3}.

Munteanu \cite{M} proved the following improved estimate for K\"{a}hler manifolds.

\begin{theorem}
Suppose $\left(  M,g\right)  $ is a noncompact complete K\"{a}hler manifold of
complex dimension $n$. If $\mathrm{Ric}\geq-\left(  n+1\right)  $, then
$\lambda_{0}\leq n^{2}/2$.\bigskip
\end{theorem}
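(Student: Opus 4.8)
The plan is to combine the variational characterization of $\lambda_{0}$ with a sharp Laplacian comparison theorem that exploits the K\"{a}hler structure. The model case is complex hyperbolic space $\mathbb{CH}^{n}$, which, after scaling so that $\mathrm{Ric}=-(n+1)$, has $\lambda_{0}=n^{2}/2$ and whose geodesic spheres have mean curvature $\Delta r\to\sqrt{2}\,n$ as $r\to\infty$. Since radial test functions detect exactly the exponential growth rate of the volume, the whole theorem reduces to showing that on any such $M$ the distance function satisfies $\limsup_{r\to\infty}\Delta r\le\sqrt{2}\,n$, the same rate as in the model. The heart of the matter is therefore a K\"{a}hler improvement of the usual Laplacian comparison, in which the naive real--dimensional bound $\sqrt{(2n-1)(n+1)}$ is sharpened to $\sqrt{2n^{2}}=\sqrt{2}\,n$.

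First I would carry out the elementary half. Fix $p\in M$, let $r$ be the distance to $p$, and suppose we already know $\Delta r\le\bar{\mu}+o(1)$ with $\bar{\mu}=\sqrt{2}\,n$; by the coarea formula this gives $\mathrm{Area}(\partial B_{r})\le Ce^{(\bar{\mu}+\varepsilon)r}$ and hence $\int_{M}e^{-2\alpha r}<\infty$ whenever $2\alpha>\bar{\mu}$. For such $\alpha$ I test the Rayleigh quotient with $\phi=e^{-\alpha r}$ truncated by a cutoff equal to $1$ on $B_{R}$ and vanishing outside $B_{2R}$. On the region where $\phi=e^{-\alpha r}$ one has $\left\vert\nabla\phi\right\vert^{2}=\alpha^{2}\phi^{2}$ exactly, while the gradient of the cutoff is supported in $B_{2R}\setminus B_{R}$, where $\phi$ is already $O(e^{-\alpha R})$; letting $R\to\infty$ the cutoff contribution is negligible against the finite main integral, so $\lambda_{0}\le\alpha^{2}$. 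Letting $\alpha\downarrow\bar{\mu}/2$ yields $\lambda_{0}\le\bar{\mu}^{2}/4=n^{2}/2$. This step is routine and uses only completeness together with the growth bound on $\Delta r$.

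The main obstacle is the sharp comparison $\limsup_{r\to\infty}\Delta r\le\sqrt{2}\,n$. Here I would use that $J$ is parallel: along a unit-speed minimizing geodesic $\gamma$ both $\partial_{r}=\nabla r$ and $\xi=J\nabla r$ are parallel, so the complex line they span and its orthogonal complement $W$ (of real dimension $2n-2$) are parallel subbundles. Writing $A=\mathrm{Hess}\,r$, the radial identity $A(\partial_{r},\cdot)=0$ together with the K\"{a}hler relations forces the $(1,1)$ and $(2,0)$ parts of the complex Hessian in the $e_{1}$-direction to satisfy $r_{1\bar{1}}=\tfrac{1}{2}a$ and $r_{11}=-\tfrac{1}{2}a$, where $a=A(\xi,\xi)$; this is precisely the algebraic reason that a full eigenvalue is ``doubled'' on the $\xi$-direction. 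Splitting the Riccati equation $A'+A^{2}+R=0$ along the parallel decomposition $\mathbb{R}\xi\oplus W$ gives the coupled system
\[
a'\le-a^{2}-H,\qquad b'\le-\frac{b^{2}}{2n-2}-R_{W},
\]
where $b=\Delta r-a=\mathrm{tr}_{W}A$, $H=K(\partial_{r},J\partial_{r})$ is the holomorphic sectional curvature, and $R_{W}=\sum_{W}K(\partial_{r},\cdot)$, subject only to $H+R_{W}=\mathrm{Ric}(\partial_{r},\partial_{r})\ge-(n+1)$. The delicate point is that the Ricci bound does not control $H$ and $R_{W}$ individually: making $H$ very negative would drive $a$ up, but it forces $R_{W}$ positive, which by the conjugate-point mechanism prevents $\gamma$ from minimizing to large $r$. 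I would make this trade-off precise by comparing the system with the $\mathbb{CH}^{n}$ model solutions $a_{0}=\sqrt{2}\coth(\sqrt{2}\,r)\to\sqrt{2}$ and $b_{0}\to\sqrt{2}\,(n-1)$, showing that no admissible splitting of the curvature lets $a+b$ exceed $\sqrt{2}\,n$ asymptotically. This trapping argument for the coupled Riccati flow, reconciling the unconstrained holomorphic curvature, is where the real work lies; once it is done, $\bar{\mu}=\sqrt{2}\,n$ feeds into the first step and the theorem follows.
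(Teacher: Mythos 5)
The paper itself contains no proof of this statement: it is quoted as Munteanu's theorem and cited to \cite{M} (with the earlier, weaker version under bisectional curvature bounds due to Li--Wang \cite{LW2}). So the comparison below is with the arguments in those references.

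Your first step is fine: if one knew $\limsup_{r\rightarrow\infty}\Delta r\leq\sqrt{2}\,n$, then the coarea/volume-growth bound plus the cutoff test functions $e^{-\alpha r}$, $2\alpha>\sqrt{2}\,n$, would give $\lambda_{0}\leq n^{2}/2$; this reduction is standard and correct. The gap is that the ``main obstacle'' is never actually overcome, and the mechanism you sketch for it cannot work. The constraint system you set up --- $a^{\prime}\leq-a^{2}-H$, $b^{\prime}\leq-b^{2}/\left(  2n-2\right)  -R_{W}$, subject only to $H+R_{W}\geq-\left(  n+1\right)  $ and to the index-form (conjugate point) restrictions along a minimizing geodesic --- contains no K\"{a}hler information at all: the parallelism of $J\nabla r$ and the identities $r_{1\overline{1}}=a/2$, $r_{11}=-a/2$ are automatic for any distance function on any K\"{a}hler manifold and impose no restriction on the curvature data entering the system. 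Consequently the system admits the isotropic data $H\equiv-\left(  n+1\right)  /\left(  2n-1\right)  $, $R_{W}\equiv-\left(  2n-2\right)  \left(  n+1\right)  /\left(  2n-1\right)  $, which is exactly the radial curvature data of real hyperbolic space $\mathbb{RH}^{2n}$ rescaled to $\mathrm{Ric}=-\left(  n+1\right)  $: all radial curvatures are negative, geodesics minimize forever, so your ``conjugate-point mechanism'' never engages, and the Riccati flow settles at $a\rightarrow\sqrt{\left(  n+1\right)  /\left(  2n-1\right)  }$, $b\rightarrow\left(  2n-2\right)  \sqrt{\left(  n+1\right)  /\left(  2n-1\right)  }$, i.e.
\[
a+b\rightarrow\sqrt{\left(  2n-1\right)  \left(  n+1\right)  }=\sqrt{2n^{2}+n-1}>\sqrt{2}\,n\qquad\left(  n\geq2\right)  .
\]
So no trapping argument for this coupled system can yield $\sqrt{2}\,n$; any proof must inject curvature identities special to K\"{a}hler geometry (e.g.\ that $K\left(  \gamma^{\prime},E\right)  +K\left(  \gamma^{\prime},JE\right)  $ is a bisectional curvature, or second variation with $J$-rotated test fields), and that is precisely where the real difficulty lies.

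This is not a presentational quibble: a sharp pointwise Laplacian comparison of the type you assume is known only under a bisectional curvature lower bound (Li--Wang \cite{LW2}), and Munteanu's proof under the Ricci hypothesis \cite{M} deliberately avoids any pointwise comparison, obtaining instead the sharp exponential volume growth rate by an integrated argument along geodesics and geodesic spheres. To my knowledge the pointwise asymptotic bound $\limsup_{r\rightarrow\infty}\Delta r\leq\sqrt{2}\,n$ under $\mathrm{Ric}\geq-\left(  n+1\right)  $ alone is not established anywhere; your proposal therefore rests its entire weight on an unproved (and possibly false) statement.
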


We remark that prior to Muntanu's work, P. Li and J. Wang \cite{LW2}
established the same inequality under the stronger curvature assumption that
the bisectional curvature $K_{\mathbb{C}}\geq-1$, i.e. for any vectors $X,Y$%
\[
R\left(  X,Y,X,Y\right)  +R\left(  X,JY,X,JY\right)  \geq-\left(  \left\vert
X\right\vert ^{2}\left\vert Y\right\vert ^{2}+\left\langle X,Y\right\rangle
^{2}+\left\langle X,JY\right\rangle ^{2}\right)
\]
where $J$ is the complex structure. The Li-Wang \cite{LW2} and Munteanu
\cite{M} estimates are sharp on the complex hyperbolic space $\mathbb{CH}^{n}$
normalized to have sectional curvature in $\left[  -2,-1/2\right]  $ so that
$K_{\mathbb{C}}=-1$ and $\mathrm{Ric}=-\left(  n+1\right)  $.

In general, it is more difficult to establish a positive lower bound for
$\lambda_{0}$. In the Riemannian case, there is a nice theorem due to Lee
\cite{Lee1} for the so called conformally compact Einstein manifolds. A
Riemannian manifold $\left(  M^{n},g\right)  $ is called conformally compact
if $M$ is the interior of a compact manifold $\overline{M}$ with boundary
$\Sigma$ and for any defining function $r$ of the boundary (i.e. $r\in
C^{\infty}\left(  \overline{M}\right)  $ s.t. $r>0$ on $M$, $r=0$ on $\Sigma$
and $dr$ is nowhere zero along $\Sigma$), $\overline{g}=r^{2}g$ extends to a
$C^{3}$ metric on $\overline{M}$. The conformal class of the metric
$\overline{g}|_{\Sigma}$ is invariantly defined and $\Sigma$ with this
conformal structure is called the conformal infinity of $\left(
M^{n},g\right)  $. We will also assume that $\left\vert dr\right\vert
_{\overline{g}}^{2}=1$ on $\Sigma$. This condition is invariantly defined and
such a metric can be termed AH (asymptotically hyperbolic) as one can check
that the sectional curvature $K\rightarrow-1$ near the conformal infinity. If
$g$ is Einstein, i.e. $\mathrm{Ric}\left(  g\right)  =-\left(  n-1\right)  g$,
it must be asymptotically hyperbolic.

\bigskip It is known that the continuous spectrum of an AH Riemannian manifold
consists of the ray $[\left(  n-1\right)  ^{2}/4,+\infty)$ with no embedded
eigenvalues. In particular $\lambda_{0}\leq\left(  n-1\right)  ^{2}/4$.
However, in general there may exist finitely many eigenvalues in the interval
$\left(  0,\left(  n-1\right)  ^{2}/4\right)  $ even if $g$ is Einstein. The
following theorem was proved by Lee in \cite{Lee1}.

\begin{theorem}
\label{Lee} Let $\left(  M^{n},g\right)  $ be a conformally compact Einstein
manifold. If its conformal infinity has nonnegative Yamabe invariant, then
$\lambda_{0}=\left(  n-1\right)  ^{2}/4$, i.e. the spectrum is $[\left(
n-1\right)  ^{2}/4,+\infty)$.
\end{theorem}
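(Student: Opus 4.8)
The plan is to prove the single inequality $\lambda_0 \geq (n-1)^2/4$. Since the hypotheses force $\left(M^n,g\right)$ to be asymptotically hyperbolic, the facts quoted above already give $\lambda_0 \leq (n-1)^2/4$, together with continuous spectrum equal to $[(n-1)^2/4,\infty)$ and no embedded eigenvalues; hence the reverse bound will pin down $\lambda_0$ and simultaneously exclude any eigenvalue in $\left(0,(n-1)^2/4\right)$, yielding the stated spectrum. The tool I would use for the lower bound is the positive-supersolution criterion: if there is $f\in C^2(M)$ with $f>0$ and
\[
\Delta f + \frac{(n-1)^2}{4}\,f \leq 0 \quad \text{on } M,
\]
then $\lambda_0 \geq (n-1)^2/4$. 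This follows from the substitution $u=fv$ for $u\in C_c^1(M)$: integrating by parts gives
\[
\int_M |\nabla u|^2 = \int_M f^2|\nabla v|^2 - \int_M v^2 f\,\Delta f \geq \frac{(n-1)^2}{4}\int_M f^2 v^2 = \frac{(n-1)^2}{4}\int_M u^2 ,
\]
so the whole problem reduces to constructing one global positive supersolution $f$.

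To build $f$ near the conformal infinity I would exploit the Einstein structure. Solving the Yamabe problem on $\Sigma$, choose a representative $\hat g$ of the conformal infinity with constant scalar curvature $R_{\hat g}$; the hypothesis of nonnegative Yamabe invariant is exactly what permits the choice $R_{\hat g}\geq 0$. Let $r$ be the geodesic defining function determined by $\hat g$, so that in a collar $g=r^{-2}(dr^2+g_r)$ with $g_r=\hat g - r^2 P + O(r^4)$, where $P$ is the Schouten tensor of $\hat g$; this normal-form expansion and the identification of the $r^2$-coefficient are where $\mathrm{Ric}(g)=-(n-1)g$ is used. A direct computation for the radial function $f=r^{(n-1)/2}$ then gives
\[
\Delta f + \frac{(n-1)^2}{4}\,f = -\,\frac{(n-1)R_{\hat g}}{4(n-2)}\, r^2 f + O(r^4 f),
\]
so the leading correction has the favorable sign precisely when $R_{\hat g}\geq 0$. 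Thus on a neighborhood $\{r<\delta\}$ of infinity, $f=r^{(n-1)/2}$ is a supersolution; and for the remainder terms one may, if needed, work with $\Lambda<(n-1)^2/4$, letting $\Lambda\uparrow (n-1)^2/4$ and using the constant slack $(n-1)^2/4-\Lambda$ to dominate the $O(r^2)$ error in the collar.

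The main difficulty, and the heart of the matter, is global: the collar construction only controls $f$ where the normal exponential map is a diffeomorphism, whereas the supersolution inequality must hold on all of $M$, and it is here that the Einstein condition and the global sign of $R_{\hat g}$ do the real work — near infinity alone any AH metric would suffice. I would extend $r^{(n-1)/2}$ to a global positive function and try to verify $\Delta f + \frac{(n-1)^2}{4}f\leq 0$ throughout $M$ by a maximum-principle/barrier argument: the favorable boundary sign coming from $R_{\hat g}\geq 0$ must be propagated inward using the rigidity of $\mathrm{Ric}(g)=-(n-1)g$, which controls the Hessian of the defining function and the mean curvature of the level sets $\{r=\text{const}\}$ globally. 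Concretely one is led to study $|\nabla r|^2_{\bar g}$, or the second fundamental form of the level sets, and to show it satisfies a differential inequality whose only admissible sign, given the boundary data, forces the desired supersolution. Closing this global step — manufacturing a single everywhere-defined positive $f$ out of the good local model — is the step I expect to be the crux, and is presumably where the bulk of the analysis must be concentrated.

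As a cross-check and alternative, one could argue by contradiction: an $L^2$-eigenfunction with eigenvalue $\Lambda<(n-1)^2/4$ must decay like $r^{(n-1)/2+\nu}$ with $\nu=\sqrt{(n-1)^2/4-\Lambda}>0$, by the indicial-root analysis for the AH Laplacian, and feeding this decay into a Rellich--Pohozaev identity with the vector field $r\partial_r$ produces a bulk integral whose sign is again governed by $R_{\hat g}\geq 0$ through the Einstein equation, forcing the eigenfunction to vanish. Either way, the nonnegativity of the Yamabe invariant enters only through the sign of the scalar curvature of the chosen conformal representative of $\Sigma$.
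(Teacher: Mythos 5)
Your reduction (only the lower bound needs proof; positive-supersolution criterion) and your collar computation are both correct — the identity $\Delta f+\tfrac{(n-1)^{2}}{4}f=-\tfrac{(n-1)R_{\hat g}}{4(n-2)}r^{2}f+\cdots$ for $f=r^{(n-1)/2}$ is exactly what the Graham--Lee expansion gives. But the argument has a genuine gap at precisely the point you flag: no global positive supersolution is ever produced, and the devices you sketch would not produce one. The quantities you propose to control inward — $|\nabla r|^{2}_{\bar g}$, the shape operator of the level sets $\{r=\mathrm{const}\}$ — are only defined and only satisfy usable equations where the normal form $g=r^{-2}(dr^{2}+g_{r})$ holds, i.e.\ in the collar; beyond it there is no canonical extension of $r^{(n-1)/2}$, and $\Delta$ of an arbitrary extension has no sign. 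Nor does the slack-$\Lambda$ trick help: a supersolution defined only near infinity proves only that any spectrum below the critical value must consist of $L^{2}$ eigenvalues — this is exactly the content of Proposition 2 of this paper (in the K\"ahler setting), and it holds for \emph{every} AH/ACH metric with no Einstein or Yamabe hypothesis whatsoever — but it cannot exclude such eigenvalues, since an $L^{2}$ eigenfunction with small eigenvalue is compatible with any prescribed asymptotic geometry. The Calabi example in Section 2 (ACH K\"ahler--Einstein with $\lambda_{0}=2(n-1)<n^{2}/2$ when the boundary sign condition fails) shows the phenomenon is real: the theorem's content is irreducibly global, so the step you defer is not ``the bulk of the analysis,'' it \emph{is} the theorem. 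The same objection applies to your Pohozaev cross-check: the vector field $r\partial_{r}$ lives only in the collar, and the cut-off boundary terms it generates in the interior are signless.

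For comparison, the proof presented in this paper (following \cite{W1}) closes the global step by a mechanism that needs no global supersolution at all. One exhausts $M$ by the compact manifolds $M^{\varepsilon}=\{r\geq\varepsilon\}$; the Einstein expansion $h_{r}=h-\frac{r^{2}}{n-3}\left(\mathrm{Ric}(h)-\frac{s}{2(n-2)}h\right)+o(r^{2})$, with $h$ a representative of the conformal infinity having $s>0$, gives for $\partial M^{\varepsilon}$ the mean curvature $H=n-1+\frac{s}{2(n-2)}\varepsilon^{2}+o(\varepsilon^{2})\geq n-1$ for small $\varepsilon$. Theorem \ref{rb} then yields $\lambda_{0}^{Dir}(M^{\varepsilon})\geq(n-1)^{2}/4$, and since every compactly supported test function is supported in some $M^{\varepsilon}$, this forces $\lambda_{0}(M)\geq(n-1)^{2}/4$. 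The inward propagation of boundary information that your approach is missing happens inside Theorem \ref{rb}, by comparison geometry rather than by barriers: with $\mathrm{Ric}\geq-(n-1)$ and $H\geq n-1$, index-form comparison along geodesics issuing orthogonally from the boundary gives $\Delta\rho\leq-(n-1)$ in the support sense for the distance $\rho$ to $\partial M^{\varepsilon}$, valid on all of $M^{\varepsilon}$; the maximum principle applied to $fe^{-(n-1)\rho/2}$, with $f$ the first Dirichlet eigenfunction, then gives the eigenvalue bound. If you insist on your route, you would essentially have to reproduce Lee's original global construction of a supersolution \cite{Lee1}, which is a substantive argument in its own right, not a remainder estimate.
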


\bigskip

The main purpose of this paper is to investigate if there is a K\"{a}hler
analogue of Lee's theorem. In Section 2, we consider a class of complete
K\"{a}hler manifolds with a strictly pseudoconvex boundary at infinity. After
studying its asymptotic geometry, we formulate a conjecture on its bottom of
spectrum in the K\"{a}hler-Einstein case. In Section 3, we discuss a geometric
approach to estimate the bottom of spectrum. Specifically, we prove a sharp
lower estimate which illustrates the boundary effect. In the last section, we
focus on the K\"{a}hler-Einstein metric constructed by Cheng-Yau \cite{CY} on
a strictly pseudoconvex domain in $\mathbb{C}^{n}$. We prove a theorem which
yields the optimal lower bound for the bottom of spectrum under the condition
that the induced pseuod-hermitian structure has nonnegative pseudo-hermitian
scalar curvature.

\section{The formulation of the problem}

Suppose $\Omega\subset\subset M$ is a smooth precompact domain in a complex
manifold of complex dimension $n$. We assume the boundary $\Sigma
=\partial\Omega$ is strongly pseudoconvex in the sense that there is a
negative defining function $\phi$ for the boundary (i.e. $\phi\in C^{\infty
}\left(  \overline{\Omega}\right)  $ s.t. $\phi<0$ on $\Omega$, $\phi=0$ on
$\Sigma$ and $d\phi$ is nowhere zero along $\Sigma$ ) s.t. $\sqrt{-1}%
\partial\overline{\partial}\phi>0$ near $\Sigma$. Then we can consider near
$\Sigma$ the following K\"{a}hler metric%
\begin{align*}
\omega_{0}  &  =-\sqrt{-1}\partial\overline{\partial}\log\left(  -\phi\right)
\\
&  =\sqrt{-1}\left(  -\frac{\phi_{i\overline{j}}}{\phi}+\frac{\phi_{i}%
\phi_{\overline{j}}}{\phi^{2}}\right)  dz^{i}\wedge d\overline{z}^{j}.
\end{align*}
By the assumption, the complex Hessian $H\left(  \phi\right)  =\left[
\phi_{i\overline{j}}\right]  $ is a positive definite matrix and its inverse
will be denoted by $\left[  \phi^{i\overline{j}}\right]  $. In the following,
we will also use the notation $\phi^{i}=\phi^{i\overline{j}}\phi_{\overline
{j}},\phi^{\overline{j}}=\phi^{i\overline{j}}\phi_{i}$. We have
\begin{align*}
\widehat{g}_{i\overline{j}}  &  =-\frac{\phi_{i\overline{j}}}{\phi}+\frac
{\phi_{i}\phi_{\overline{j}}}{\phi^{2}},\\
\widehat{g}^{i\overline{j}}  &  =-\phi\left(  \phi^{i\overline{j}}+\frac
{\phi^{i}\phi^{\overline{j}}}{\phi-|\partial\phi|_{\phi}^{2}}\right)  ,
\end{align*}
where $|\partial\phi|_{\phi}^{2}=\phi^{i\overline{j}}\phi_{i}\phi
_{\overline{j}}$. Notice
\[
\widehat{g}^{i\overline{j}}\phi_{\overline{j}}=\left(  -\phi\right)  ^{2}%
\frac{\phi^{i}}{|\partial\phi|_{\phi}^{2}-\phi}.
\]
Moreover,%
\[
\widehat{G}=\det\left[  \widehat{g}_{i\overline{j}}\right]  =\left(
-\phi\right)  ^{-\left(  n+1\right)  }\left(  |\partial\phi|_{\phi}^{2}%
-\phi\right)  \det H\left(  \phi\right)  ,
\]
Hence the Ricci tensor is given by%
\[
\widehat{R}_{i\overline{j}}=-\left(  n+1\right)  \widehat{g}_{i\overline{j}%
}-\frac{\partial^{2}}{\partial z^{i}\partial\overline{z}^{j}}\log\left[
\left(  |\partial\phi|_{\phi}^{2}-\phi\right)  \det H\left(  \phi\right)
\right]  .
\]

\begin{definition}
A K\"{a}hler metric $\omega$ on $\Omega$ is called ACH (asymptotically complex
hyperbolic) if near $\partial\Omega$%
\[
\omega=\omega_{0}+\Theta,
\]
where $\Theta$ extends to a $C^{3}$ form on $\overline{\Omega}$.
\end{definition}

(Cf. \cite{W3} where the complex hyperbolic space is normalized to have
holomorphic sectional curvature $-1$ instead of $-4$.)

\begin{proposition}
The curvature tensor is asymptotically constant. More precisely for any pair
of $\left(  1,0\right)  $ vectors $X,Y$%
\[
R\left(  X,\overline{X},Y,\overline{Y}\right)  =\left(  1+o\left(  1\right)
\right)  \left(  \left\vert X\right\vert ^{2}\left\vert Y^{2}\right\vert
+\left\langle X,\overline{Y}\right\rangle ^{2}\right)
\]
near the boundary, i.e. $K_{\mathbb{C}}\rightarrow-1$ at infinity.
\end{proposition}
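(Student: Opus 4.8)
The plan is to reduce to the background metric $\omega_{0}$ and then extract the leading asymptotics of its curvature. The starting observation is that the eigenvalues of $\widehat{g}_{i\overline{j}}$ blow up as $\phi\rightarrow0$: reading off $\widehat{g}_{i\overline{j}}=-\phi_{i\overline{j}}/\phi+\phi_{i}\phi_{\overline{j}}/\phi^{2}$, the metric grows like $(-\phi)^{-2}$ in the complex normal direction (spanned by $\phi^{i\overline{j}}\phi_{\overline{j}}\partial_{i}$) and like $(-\phi)^{-1}$ in the complex tangential (Levi) directions. I would therefore fix a local frame $\{E_{a}\}$ that is unitary for $\omega_{0}$, with $E_{n}$ along the complex normal and $E_{1},\dots,E_{n-1}$ spanning the Levi distribution; these vectors then have coordinate size $(-\phi)$ and $(-\phi)^{1/2}$ respectively. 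Since $\Theta$ extends to a bounded $C^{3}$ form on $\overline{\Omega}$, its components and their frame derivatives are suppressed by positive powers of $(-\phi)$, so in this frame $\omega=\mathrm{Id}+o(1)$ together with the derivatives relevant to curvature. A routine perturbation argument---using that the connection coefficients of the $\omega_{0}$-unitary frame are bounded, which is a byproduct of the computation below---then shows that the curvature of $\omega$ agrees to leading order with that of $\omega_{0}$. It thus suffices to prove that $R^{\omega_{0}}(X,\overline{X},Y,\overline{Y})\rightarrow|X|^{2}|Y|^{2}+\langle X,\overline{Y}\rangle^{2}$ for $\omega_{0}$-unit vectors, i.e. that the curvature tensor converges to that of the complex space form with $K_{\mathbb{C}}=-1$.

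For $\omega_{0}$ I would use the K\"ahler curvature formula
\[
R_{i\overline{j}k\overline{l}}=-\frac{\partial^{2}\widehat{g}_{i\overline{j}}}{\partial z^{k}\partial\overline{z}^{l}}+\widehat{g}^{p\overline{q}}\frac{\partial\widehat{g}_{i\overline{q}}}{\partial z^{k}}\frac{\partial\widehat{g}_{p\overline{j}}}{\partial\overline{z}^{l}},
\]
substituting $\widehat{g}_{i\overline{j}}=-\phi_{i\overline{j}}/\phi+\phi_{i}\phi_{\overline{j}}/\phi^{2}$ and the explicit inverse $\widehat{g}^{i\overline{j}}$ recorded above. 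Differentiating $\widehat{g}_{i\overline{j}}$ produces three types of terms: (i) those in which every derivative lands on the factors $\phi^{-1},\phi^{-2}$ coming from the potential $-\log(-\phi)$; (ii) those carrying one derivative of the complex Hessian $\phi_{i\overline{j}}$; and (iii) those involving third and fourth derivatives of $\phi$. I would contract each contribution against the frame $\{E_{a}\}$ and track the net power of $(-\phi)$, taking into account both the $(-\phi)$ and $(-\phi)^{1/2}$ sizes of the frame vectors and the overall $(-\phi)$ factor carried by $\widehat{g}^{p\overline{q}}$.

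The decisive point is that the type-(i) terms are universal: they involve $\phi$ only through $-\log(-\phi)$ and, after contraction and normalization, reproduce exactly the constant tensor $g_{i\overline{j}}g_{k\overline{l}}+g_{i\overline{l}}g_{k\overline{j}}$ of the complex space form with $K_{\mathbb{C}}=-1$. One can verify this by direct computation, or recognize it through osculation: to second order at a boundary point, $\phi$ can be normalized by a local biholomorphism and dilation to the model $|z|^{2}-1$, for which $\omega_{0}$ is the complex hyperbolic metric, and the type-(i) contribution is precisely the part of the curvature that depends only on this $2$-jet. The type-(ii) and type-(iii) terms encode the variation of the Levi form and the higher jets of $\phi$; each acquires a surplus factor of $(-\phi)^{1/2}$ or $(-\phi)$ once the frame scalings are inserted, since the extra derivatives of $\phi$ are bounded while the compensating inverse-metric factors supply only a fixed power of $(-\phi)$. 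Hence these terms are $O((-\phi)^{1/2})=o(1)$, and the normalized bisectional curvature converges to the asserted value.

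The main obstacle is the bookkeeping in the two middle steps: individually, many terms in the curvature formula diverge as $\phi\rightarrow0$, and the content of the proposition is that, after contraction with $\omega_{0}$-unit vectors, the divergent and $O(1)$ pieces assemble into the constant space-form tensor while the genuinely geometric error---the non-constancy of the Levi form---is suppressed. Getting the anisotropic powers of $(-\phi)$ correct for every index pattern, and in particular checking that no type-(ii) or type-(iii) term survives in the normal, tangential, or mixed components, is the delicate part. Strong pseudoconvexity is used here to guarantee that $[\phi_{i\overline{j}}]$, and hence $(-\phi)^{-1}\widehat{g}^{p\overline{q}}$, is uniformly positive definite and bounded up to $\Sigma$, which is exactly what makes the error estimates uniform. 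By comparison, the reduction from $\omega$ to $\omega_{0}$ is routine once these frame scalings are in hand.
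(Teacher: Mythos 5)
Your reduction to $\omega_{0}$ and your treatment of terms carrying third or fourth derivatives of $\phi$ are fine, and in spirit this is the same computation the paper performs (the paper simply keeps $h_{i\overline{j}}$ inside one direct coordinate calculation rather than splitting it off first). The gap is in the central claim of your error analysis: that every non-model term, once contracted with the anisotropically scaled frame, picks up a surplus factor $(-\phi)^{1/2}$ or $(-\phi)$. Your classification (i)--(iii) omits the terms produced when a derivative lands on the numerator factors $\phi_{k},\phi_{\overline{l}}$ of $\phi_{k}\phi_{\overline{l}}/\phi^{2}$, i.e.\ the terms containing the pure (non-mixed) second derivatives $\phi_{ik},\phi_{\overline{j}\overline{l}}$, and these are exactly the terms for which your claim fails. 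Indeed, $\partial_{i}\partial_{\overline{j}}\widehat{g}_{k\overline{l}}$ contains the term $\phi_{ik}\phi_{\overline{j}\overline{l}}/\phi^{2}$ (it is visible in the paper's displayed intermediate formula). Contract it with an $\omega_{0}$-unit Levi-tangential vector $X=Y$ (coordinate size $(-\phi)^{1/2}$, with $\phi_{i}X^{i}=0$): the result is $\left\vert \phi_{ik}X^{i}X^{k}\right\vert ^{2}/\phi^{2}$, and since $\phi_{ik}X^{i}X^{k}$ is of order $(-\phi)$, this is $O(1)$, not $o(1)$; for generic $\phi$ it does not tend to zero, because the pure Hessian restricted to the Levi distribution can be killed at a single point by a quadratic holomorphic coordinate change but not identically. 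A similar $O(1)$ contribution comes from $\phi_{ik}\phi_{\overline{j}}\phi_{\overline{l}}/\phi^{3}$ contracted with vectors having both tangential and normal components. So frame scaling alone cannot prove the proposition; if these terms survived, the statement would be false.

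What actually eliminates them is a cancellation between the two halves of the curvature formula, which your argument never exhibits. The Christoffel product $\widehat{g}^{p\overline{q}}\,\partial_{i}\widehat{g}_{k\overline{q}}\,\partial_{\overline{j}}\widehat{g}_{p\overline{l}}$ contains $\widehat{g}^{p\overline{q}}\left(  \phi_{ik}\phi_{\overline{q}}/\phi^{2}\right)  \left(  \phi_{\overline{j}\overline{l}}\phi_{p}/\phi^{2}\right)  $, and since $\widehat{g}^{p\overline{q}}\phi_{p}\phi_{\overline{q}}=(-\phi)^{2}\left(  1+O(\phi)\right)  $, this equals $\phi_{ik}\phi_{\overline{j}\overline{l}}/\phi^{2}$ up to $O\left(  (-\phi)^{-1}\right)  $; subtracting it from the second-derivative term is what removes the $O(1)$ piece. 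This cancellation is precisely what the paper's ``straightforward but tedious calculation'' encodes when it concludes $R_{i\overline{j}k\overline{l}}=g_{i\overline{j}}g_{k\overline{l}}+g_{k\overline{j}}g_{i\overline{l}}+O\left(  (-\phi)^{-1}\right)  $ in coordinates, after which your frame-scaling step does finish the proof. Alternatively, the osculation remark you make in passing can be promoted into a correct argument: the part of the curvature depending only on the $2$-jet of $\phi$ at the point (which includes all $\phi_{ik}$ terms) equals the curvature of $-\sqrt{-1}\partial\overline{\partial}\log(-\widetilde{\phi})$ for the osculating Hermitian quadric $\widetilde{\phi}$, and that metric is exactly complex hyperbolic, so the entire $2$-jet part is exactly $\widehat{g}_{i\overline{j}}\widehat{g}_{k\overline{l}}+\widehat{g}_{i\overline{l}}\widehat{g}_{k\overline{j}}$; the leftover terms all carry a third or fourth derivative of $\phi$ and do succumb to your frame estimates. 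But that decomposition is by jet order of $\phi$, not the one you wrote down, and the difference between the two is exactly where your proof breaks.
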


\begin{proof}
In local coordinates near the boundary, we write the metric as
\[
g_{i\overline{j}}=-\frac{\phi_{i\overline{j}}}{\phi}+\frac{\phi_{i}%
\phi_{\overline{j}}}{\phi^{2}}+h_{i\overline{j}},
\]
with $h_{i\overline{j}}$ $C^{3}$ up to the boundary. Direct calculation shows%
\begin{align*}
\frac{\partial g_{k\overline{l}}}{\partial z_{i}}  &  =\frac{\phi_{i}}{-\phi
}g_{k\overline{l}}+\frac{\phi_{k}}{-\phi}g_{i\overline{l}}+\frac
{\phi_{ik\overline{l}}}{-\phi}+\frac{\phi_{ik}\phi_{\overline{l}}}{\phi^{2}}\\
&  -\frac{\phi_{i}}{-\phi}h_{k\overline{l}}-\frac{\phi_{k}}{-\phi
}h_{i\overline{l}}+\frac{\partial g_{k\overline{l}}}{\partial z_{i}},
\end{align*}%
\begin{align*}
\frac{\partial^{2}g_{k\overline{l}}}{\partial z_{i}\partial z_{\overline{j}}}
&  =g_{i\overline{j}}g_{k\overline{l}}+g_{k\overline{j}}g_{i\overline{l}%
}+\frac{\phi_{i}}{-\phi}\frac{\partial g_{k\overline{l}}}{\partial
z_{\overline{j}}}+\frac{\phi_{k}}{-\phi}\frac{\partial g_{i\overline{l}}%
}{\partial z_{\overline{j}}}\\
&  +\frac{\phi_{i\overline{j}k\overline{l}}}{-\phi}+\frac{\phi_{ik\overline
{l}}\phi_{\overline{j}}+\phi_{ik\overline{j}}\phi_{\overline{l}}+\phi_{ik}%
\phi_{\overline{j}\overline{l}}}{\phi^{2}}-2\frac{\phi_{ik}\phi_{\overline{j}%
}\phi_{\overline{l}}}{\phi^{3}}\\
&  -h_{i\overline{j}}g_{k\overline{l}}-h_{k\overline{j}}g_{i\overline{l}%
}-g_{i\overline{j}}h_{k\overline{l}}+g_{k\overline{j}}h_{i\overline{l}%
}+h_{i\overline{j}}h_{k\overline{l}}+h_{k\overline{j}}h_{i\overline{l}}\\
&  -\frac{\phi_{i}}{-\phi}\frac{\partial h_{k\overline{l}}}{\partial
z_{\overline{j}}}-\frac{\phi_{k}}{-\phi}\frac{\partial h_{i\overline{l}}%
}{\partial z_{\overline{j}}}+\frac{\partial^{2}h_{k\overline{l}}}{\partial
z_{i}\partial z_{\overline{j}}}.
\end{align*}
Then a straightforward but tedious calculation yields%
\begin{align*}
R_{i\overline{j}k\overline{l}}  &  =\frac{\partial^{2}g_{k\overline{l}}%
}{\partial z_{i}\partial z_{\overline{j}}}-g^{p\overline{q}}\frac{\partial
g_{p\overline{l}}}{\partial z_{\overline{j}}}\frac{\partial g_{k\overline{q}}%
}{\partial z_{i}}\\
&  =g_{i\overline{j}}g_{k\overline{l}}+g_{k\overline{j}}g_{i\overline{l}%
}+O\left(  \frac{1}{-\phi}\right)  .
\end{align*}

\end{proof}

Let $f=\left(  -\phi\right)  ^{\alpha}$. Then
\begin{align*}
f_{i}  &  =-\alpha\left(  -\phi\right)  ^{\alpha-1}\phi_{i},\\
f_{i\overline{j}}  &  =-\alpha\left(  -\phi\right)  ^{\alpha-1}\phi
_{i\overline{j}}+\alpha\left(  \alpha-1\right)  \left(  -\phi\right)
^{\alpha-2}\phi_{i}\phi_{\overline{j}}.
\end{align*}
Thus, by direct calculation%
\begin{align*}
\Delta f  &  =2g^{i\overline{j}}f_{i\overline{j}}\\
&  =2\left[  \widehat{g}^{i\overline{j}}-\widehat{g}^{i\overline{l}%
}h_{k\overline{l}}\widehat{g}^{i\overline{j}}+O\left(  \phi^{3}\right)
\right]  f_{i\overline{j}}\\
&  =2\alpha\left(  \alpha-n\right)  \left(  -\phi\right)  ^{\alpha}\left[
1+O\left(  \phi\right)  \right]  .
\end{align*}

\begin{proposition}
For an ACH K\"{a}hler manifold, we always have%
\[
\lambda_{0}\leq n^{2}/2.
\]
Moreover, any number $\mu<n^{2}/2$ in the spectrum must be an $L^{2}$ eigenvalue.
\end{proposition}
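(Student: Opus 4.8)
The plan is to exploit the explicit approximate eigenfunction furnished by the computation immediately preceding the statement. Setting $\alpha=n/2$ in the formula $\Delta f=2\alpha(\alpha-n)(-\phi)^{\alpha}[1+O(\phi)]$, the function $f=(-\phi)^{n/2}$ satisfies
\[
\Delta f=-\frac{n^{2}}{2}\,f\,(1+O(\phi))
\]
near $\Sigma$, so it is an approximate eigenfunction for the eigenvalue $n^{2}/2$. I would first record its borderline $L^{2}$ behaviour: in terms of the radial variable $r=-\log(-\phi)$, which tends to $+\infty$ at $\Sigma$, the formula for $\widehat{G}$ gives $f^{2}\,dV\sim dr\,d\sigma$ on each slice (so $\int f^{2}\,dV$ diverges only linearly in $r$), while $|\nabla r|^{2}$ tends to a positive constant. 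This critical, logarithmically divergent behaviour is exactly what singles out the value $n^{2}/2$.

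For the upper bound I would use the ground-state substitution. Writing a compactly supported test function as $u=fw$ and integrating by parts (all supports compact) gives the exact identity
\[
\int_{M}|\nabla u|^{2}=\int_{M}f^{2}|\nabla w|^{2}-\int_{M}w^{2}f\,\Delta f.
\]
Choosing $w=w(r)$ to rise from $0$ to $1$ on $[R_{1},R_{1}+1]$ and to decay slowly back to $0$ on $[R,2R]$ (so that $u$ is supported in the collar $\{R_{1}\le r\le 2R\}$, which is compact in $\Omega$), the term $-\int w^{2}f\Delta f$ equals $\tfrac{n^{2}}{2}\int u^{2}(1+O(\phi))$, whereas $\int f^{2}|\nabla w|^{2}$ is comparable to $\int|w'(r)|^{2}\,dr\,d\sigma$, whose ratio to $\int f^{2}w^{2}\approx\int|w(r)|^{2}\,dr\,d\sigma$ is $O(R^{-2})$ by the one-dimensional Rayleigh quotient over a long interval. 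Since $O(\phi)=O(e^{-r})\to0$ on the support, the Rayleigh quotient of $u$ converges to $n^{2}/2$ as $R\to\infty$, yielding $\lambda_{0}\le n^{2}/2$.

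For the second assertion I would show $\sigma_{\mathrm{ess}}\subseteq[n^{2}/2,\infty)$; then any $\mu<n^{2}/2$ in the spectrum is automatically an isolated point of $\sigma$ of finite multiplicity, hence an $L^{2}$ eigenvalue. By the decomposition principle (Persson's theorem), $\inf\sigma_{\mathrm{ess}}=\lim_{R\to\infty}\lambda_{0}(M\setminus K_{R})$ with $K_{R}=\{r\le R\}$ compact. For $u$ supported in $\{r>R_{0}\}$, the same substitution $u=fw$ together with $\int f^{2}|\nabla w|^{2}\ge0$ gives
\[
\int_{M}|\nabla u|^{2}\ge\frac{n^{2}}{2}\int_{M}u^{2}(1+O(\phi))\ge\frac{n^{2}}{2}(1-\varepsilon)\int_{M}u^{2},
\]
once $R_{0}$ is large enough that $|O(\phi)|<\varepsilon$ there. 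Letting $R_{0}\to\infty$ forces $\inf\sigma_{\mathrm{ess}}\ge n^{2}/2$, which with the upper bound completes the argument.

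I expect the main obstacle to be the uniform control of the asymptotics near $\Sigma$: one must verify that the error $O(\phi)$ in $\Delta f$, the limit of $|\nabla r|^{2}$, and the comparison $f^{2}\,dV\sim dr\,d\sigma$ all hold uniformly up to the boundary, which rests on the $C^{3}$ regularity in the definition of ACH and on Proposition 1. Justifying the decomposition principle in this complete (yet singular-at-infinity) setting and ensuring the resulting Hardy-type estimate is genuinely uniform on the collar are the delicate points; the remaining computations are routine.
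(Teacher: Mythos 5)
Your proposal is correct, and it shares the paper's central mechanism --- the critical function $f=(-\phi)^{n/2}$ with $\Delta f=-\tfrac{n^{2}}{2}f\left(1+O(\phi)\right)$, and the ground-state substitution $u=fw$ (the paper's proof of its key inequality for functions supported near infinity is exactly your integration-by-parts identity) --- but the packaging differs in both halves. For the upper bound, the paper does not cut off: it takes the genuinely $L^{2}$ family $f_{\alpha}=(-\phi)^{\alpha}$ with $\alpha>n/2$, computes $\int f_{\alpha}^{2}$ and $\int|\nabla f_{\alpha}|^{2}$ explicitly, and lets $\alpha\searrow n/2$, so that the Rayleigh quotients tend to $n^{2}/2$; this is quicker computationally, but since $f_{\alpha}$ is not compactly supported it tacitly uses the (completeness-based) identification of the $C_{c}^{1}$-infimum with the $W^{1,2}$-infimum, whereas your collar cutoffs produce honest test functions matching the variational definition of $\lambda_{0}$, at the cost of the one-dimensional Rayleigh-quotient estimate over long intervals. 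For the second assertion, the paper converts the estimate near infinity into Fredholmness of $\Delta+\lambda$ for every $\lambda<n^{2}/2$ by hand (a cutoff $\rho$, interior elliptic estimates on $K_{\varepsilon/2}$, and the resulting a priori estimate with compact error, following Lee's memoir), and then concludes that a spectral point below $n^{2}/2$ must be an eigenvalue; you instead quote Persson's decomposition principle to get $\inf\sigma_{\mathrm{ess}}\geq n^{2}/2$. The two are equivalent in content --- for a self-adjoint operator, $\Delta+\lambda$ is Fredholm precisely when $\lambda\notin\sigma_{\mathrm{ess}}$ --- so the choice is which standard black box to invoke: the paper's route is more self-contained and yields Fredholmness, which is of independent use, while yours is shorter and delivers slightly more refined information (any $\mu<n^{2}/2$ in the spectrum is an isolated eigenvalue of finite multiplicity). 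Both routes, as you note, require completeness of the ACH metric, and your list of points to verify (uniformity of $O(\phi)$, $|\nabla r|^{2}\to\mathrm{const}$, $f^{2}\,dV\sim dr\,d\sigma$) is exactly what the paper's explicit computations with $\widehat{g}^{i\overline{j}}$ and $\widehat{G}$ establish.
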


\begin{proof}
We first prove $\lambda_{0}\leq n^{2}/2$. For $f=\left(  -\phi\right)
^{\alpha}$ we have%
\[
\int f^{2}\frac{\omega^{n}}{n!}=\int\left(  -\phi\right)  ^{2\alpha-n-1}\det
H\left(  \phi\right)  \left\vert \partial\phi\right\vert ^{2}\left(
1+O\left(  \phi\right)  \right)  .
\]
Hence $f\in L^{2}$ as long as $\alpha>n/2$. On the other hand%
\begin{align*}
\left\vert \nabla f\right\vert ^{2}  &  =2\alpha^{2}\left(  -\phi\right)
^{2\alpha-2}g^{i\overline{j}}\phi_{i}\phi_{\overline{j}}\\
&  =2\alpha^{2}\left(  -\phi\right)  ^{2\alpha}\left(  1+O\left(
-\phi\right)  \right)  ,
\end{align*}%
\begin{align*}
\int\left\vert \nabla f\right\vert ^{2}\frac{\omega^{n}}{n!}  &  =2\int
g^{i\overline{j}}f_{i}f_{\overline{j}}\frac{\omega^{n}}{n!}\\
&  =2\alpha^{2}\int\left(  -\phi\right)  ^{2\alpha-n-1}\det H\left(
\phi\right)  \left\vert \partial\phi\right\vert ^{2}\left(  1+O\left(
\phi\right)  \right)  .
\end{align*}
It is then clear
\[
\lim_{\alpha\searrow n/2}\frac{\int\left\vert \nabla f\right\vert ^{2}%
\frac{\omega^{n}}{n!}}{\int f^{2}\frac{\omega^{n}}{n!}}=n^{2}/2.
\]
Therefore $\lambda_{0}\leq n^{2}/2$.

By the above calculation, for any $c\in\left(  0,n^{2}/2\right)  $ there
exists $\varepsilon>0$ such that the positive function $f=\left(
-\phi\right)  ^{n/2}$ satisfies%
\[
-\Delta f\geq cf
\]
outside the compact set $K_{\varepsilon}=\left\{  -\phi\geq\varepsilon
\right\}  $. The second part of the theorem then follows from some general
principle (see, e.g. \cite{Lee3}). For any $\xi\in C_{c}^{1}\left(
M\backslash K_{\varepsilon}\right)  $ we have%
\begin{equation}
\int\left\vert \nabla\xi\right\vert ^{2}\geq c\int\xi^{2}. \label{2.14}%
\end{equation}
Indeed, integrating by parts
\begin{align*}
\int\left\vert \nabla\xi\right\vert ^{2}-c\xi^{2}  &  \geq\int\left\vert
\nabla\xi\right\vert ^{2}+\frac{\Delta f}{f}\xi^{2}\\
&  =\int\left\vert \nabla\xi\right\vert ^{2}-2\frac{\xi}{f}\left\langle \nabla
f,\nabla\xi\right\rangle +\frac{\xi^{2}}{f^{2}}\left\vert \nabla f\right\vert
^{2}\\
&  =\int\left\vert \nabla\xi-\frac{\xi}{f}\nabla f\right\vert ^{2}.
\end{align*}
Let $\lambda<c$. Then from (\ref{2.14}) we have for any $\xi\in C_{c}%
^{2}\left(  M\backslash K_{\varepsilon}\right)  $%
\begin{equation}
\left\Vert \left(  \Delta+\lambda\right)  \xi\right\Vert _{L^{2}\left(
M\right)  }\geq\left(  c-\lambda\right)  \left\Vert \xi\right\Vert
_{L^{2}\left(  M\right)  }. \label{2.15}%
\end{equation}
Let $\rho\in C_{c}^{\infty}\left(  M\right)  $ be such that $0\leq\rho\leq1$,
$\rho\equiv1$ in a neighborhood of $K_{\varepsilon}$ and the support of $\rho$
is contained in the larger $K_{\varepsilon/2}$. Then for any $\xi\in C_{c}%
^{2}\left(  M\right)  $ applying (\ref{2.15}) to $\left(  1-\rho\right)  \xi$
yields%
\begin{align*}
\left(  c-\lambda\right)  \left\Vert \xi\right\Vert _{L^{2}\left(  M\backslash
K_{\varepsilon/2}\right)  }  &  \leq\left(  c-\lambda\right)  \left\Vert
\left(  1-\rho\right)  \xi\right\Vert _{L^{2}\left(  M\right)  }\\
&  \leq\left\Vert \left(  \Delta+\lambda\right)  \left[  \left(
1-\rho\right)  \xi\right]  \right\Vert _{L^{2}\left(  M\right)  }\\
&  \leq\left\Vert \left(  1-\rho\right)  \left(  \Delta+\lambda\right)
\xi\right\Vert _{L^{2}\left(  M\right)  }+A_{1}\left\Vert \xi\right\Vert
_{H^{1}\left(  K_{\varepsilon/2}\right)  },
\end{align*}
where $A_{1}>0$ is a constant depending on the $C^{2}$ norm of $\rho$. By
elliptic estimate on the compact domain $K_{\varepsilon/2}$, there exists a
constant $B>0$ such that
\[
\left\Vert \xi\right\Vert _{H^{1}\left(  K_{\varepsilon/2}\right)  }\leq
A_{2}\left(  \left\Vert \left(  \Delta+\lambda\right)  \xi\right\Vert
_{L^{2}\left(  K_{\varepsilon/2}\right)  }+\left\Vert \xi\right\Vert
_{L^{2}\left(  K_{\varepsilon/2}\right)  }\right)  .
\]
Combining the previous two inequalities, we conclude that there exists $A>0$
such that for any $\xi\in D\left(  \Delta\right)  $%
\[
\left\Vert \left(  \Delta+\lambda\right)  \xi\right\Vert _{L^{2}\left(
M\right)  }+\left\Vert \xi\right\Vert _{L^{2}\left(  K_{\varepsilon/2}\right)
}\geq A\left\Vert \xi\right\Vert _{L^{2}\left(  M\right)  }.
\]
From this inequality, it is easy to prove that $\Delta+\lambda$ is Fredholm on
$L^{2}$. In particular, $\lambda$ has to be an $L^{2}$ eigenvalue if it is in
the spectrum. We emphasize that the argument works for any $\lambda<n^{2}/2$.
\end{proof}

In summary, for an ACH K\"{a}hler manifold we have $\lambda_{0}\leq n^{2}/2$
and in general there may exist $L^{2}$ eigenvalues below $n^{2}/2$. The
interesting question is when $\lambda_{0}=n^{2}/2$. We believe this is related
to the CR geometry on the boundary when the metric is K\"{a}hler-Einstein.

Before we state a precise conjecture, it may be helpful to recall the
rudiments of CR geometry (for details one can check the orignal sources
\cite{T,We} or the recent book \cite{DT} among many other references). Let
$\Sigma$ be a smooth manifold of dimension $2m+1$. A CR structure on $\Sigma$
is a pair $\left(  H\left(  \Sigma\right)  ,J\right)  $, where $H\left(
\Sigma\right)  $ is a subbundle of rank $2m$ of the tangent bundle $T\left(
\Sigma\right)  $ and $J$ is an almost complex structure on $H\left(
\Sigma\right)  $ such that
\[
\left[  H^{1,0}\left(  \Sigma\right)  ,H^{1,0}\left(  \Sigma\right)  \right]
\subset H^{1,0}\left(  \Sigma\right)  .
\]
where $H^{1,0}\left(  \Sigma\right)  =\{u-\sqrt{-1}Ju|u\in H\left(
\Sigma\right)  \}\subset T\left(  \Sigma\right)  \otimes%
\mathbb{C}
$ and $H^{0,1}\left(  \Sigma\right)  =\overline{H^{1,0}\left(  \Sigma\right)
}$. We will assume that our CR manifold $\Sigma$ is oriented. Then there is a
contact $1$-form $\theta$ on $\Sigma$ which annihilates $H\left(
\Sigma\right)  $. Any such $\theta$ is called a pseudo-Hermitian structure on
$\Sigma$. Let $\omega=d\theta$. Then $G_{\theta}\left(  X,Y\right)
=\omega\left(  X,JY\right)  $ defines a symmetric bilinear form on the vector
bundle $H\left(  \Sigma\right)  $.We assume that $G_{\theta}$ is positive
definite. Such a CR\ manifold is said to be strongly pseudoconvex. If
$\widetilde{\theta}=f\theta$ with $f>0$, then $\widetilde{\omega}%
=d\widetilde{\theta}=fd\theta+df\wedge\theta$ and $\widetilde{\omega
}|_{H\left(  \Sigma\right)  }=f\omega|_{H\left(  \Sigma\right)  }$. Hence this
definition is independent of the choice of $\theta$.

There is a unique vector field $T$ on $\Sigma$ such that
\[
\theta\left(  T\right)  =1,T\rfloor d\theta=0.
\]
This gives rise to the decomposition%
\[
T\left(  \Sigma\right)  =H\left(  \Sigma\right)  \oplus%
\mathbb{R}
T\text{.}%
\]
Using this decomposition we then extend $J$ to an endomorphism on $T\left(
\Sigma\right)  $ by defining $J\left(  T\right)  =0$. We can also define a
Riemannian metric $g_{\theta}$ on $\Sigma$ such that
\[
g_{\theta}\left(  X,Y\right)  =G_{\theta}\left(  X,Y\right)  ,g_{\theta
}\left(  X,T\right)  =0,g_{\theta}\left(  T,T\right)  =1,
\]
$\forall X,Y\in H\left(  \Sigma\right)  $. Obviously $\theta=g_{\theta}\left(
T,\cdot\right)  ,\omega=d\theta=g_{\theta}\left(  J\cdot,\cdot\right)  $.

It is shown by Tanaka and Webster that there is a unique connection $\nabla$
on $T\left(  \Sigma\right)  $ such that

\begin{enumerate}
\item $H\left(  \Sigma\right)  $ is parallel, i.e. $\nabla_{X}Y\in
\Gamma\left(  H\left(  \Sigma\right)  \right)  $ for any $X\in T\left(
\Sigma\right)  $ and any $Y\in\Gamma\left(  H\left(  \Sigma\right)  \right)  $.

\item $\nabla J=0,\nabla g_{\theta}=0.$

\item The torsion $\tau$ satisfies%
\begin{align*}
\tau\left(  Z,W\right)   &  =0,\\
\tau\left(  Z,\overline{W}\right)   &  =\omega\left(  Z,\overline{W}\right)
T,\\
\tau\left(  T,J\cdot\right)   &  =-J\tau\left(  T,\cdot\right)
\end{align*}
for any $Z,W\in H^{1,0}\left(  \Sigma\right)  $.
\end{enumerate}

Let \thinspace$\{T_{\alpha}\}$ be a local frame for $H^{1,0}\left(
\Sigma\right)  $. Then $\{T_{\alpha},T_{\overline{\alpha}}=\overline
{T_{\alpha}},T\}$ is a local frame for $T\left(  \Sigma\right)  \otimes%
\mathbb{C}
$. The metric is determined by the positive Hermitian matrix $h_{\alpha
\overline{\beta}}=g_{\theta}\left(  T_{\alpha},T_{\overline{\beta}}\right)  $.
We have the Webster curvature tensor%
\[
R_{\mu\overline{\nu}\alpha\overline{\beta}}=\left\langle -\nabla_{\mu}%
\nabla_{\overline{\nu}}T_{\alpha}+\nabla_{\overline{\nu}}\nabla_{\mu}%
T_{\alpha}+\nabla_{\left[  T_{\mu},T_{\overline{\nu}}\right]  }T_{\alpha
},T_{\overline{\beta}}\right\rangle
\]
The pseudo-hermitian Ricci tensor is defined to be $R_{\mu\overline{\nu}%
}=-h^{\alpha\overline{\beta}}R_{\mu\overline{\nu}\alpha\overline{\beta}}$ and
the pseudo-hermitian scalar curvature $\mathcal{R}=h^{\mu\overline{\nu}}%
R_{\mu\overline{\nu}}$.

Given $\theta$, all the pseudo-hermitian structure associated the CR manifold
can be written as%
\[
\left[  \theta\right]  =\left\{  f^{2/m}\theta:f\in C^{\infty}\left(
\Sigma\right)  ,f>0\right\}  .
\]
If $\widetilde{\theta}=f^{2/\Sigma}\theta$ is another pseudo-Hermitian
structure, then the scalar curvatures are related by the following
transformation%
\[
-\frac{2\left(  m+1\right)  }{m}\Delta_{b}f+\mathcal{R}=\widetilde
{\mathcal{R}}f^{\left(  m+2\right)  /m},
\]
where $\Delta_{b}u=\mathrm{div}\left(  \nabla^{H}u\right)  $ and $\nabla^{H}u$
is the horizontal gradient.

This is similar to the formula that relates two conformal Riemannian metrics.
Motivated by the Yamabe problem in Riemannian geometry, Jersion and Lee
\cite{JL1} initiated the CR Yamabe problem. Like the Riemannian case, for a
compact strongly pseudo-convex CR manifold $\left(  \Sigma,\left[
\theta\right]  \right)  $ one can define its CR Yamabe invariant%
\[
Y\left(  \Sigma,\left[  \theta\right]  \right)  =\inf\frac{\int\left(
\left\vert \nabla^{H}f\right\vert ^{2}+\frac{m}{2\left(  m+1\right)
}\mathcal{R}f^{2}\right)  \theta\wedge\left(  d\theta\right)  ^{m}}{\left(
\int f^{2\left(  m+1\right)  /2}\theta\wedge\left(  d\theta\right)
^{m}\right)  ^{2/\left(  m+1\right)  }}.
\]
The CR Yamabe problem is whether the infimum is achieved. The CR Yamabe
problem has been intensively studied. We do not need the solution of the
CR\ Yamabe problem. If suffices to know the elementary fact that $\left(
\Sigma,\left[  \theta\right]  \right)  $ admits a pseudo-Hermitian metric with
positive, zero or negative scalar curvature iff the Yambe invariant is
positive, zero or negative, respectively.

We now go back to our domain $\Omega\subset\subset M$ with strongly
peudoconvex boundary. Clearly, $\partial\Omega$ is a strongly peudoconvex with
$\theta=\sqrt{-1}\overline{\partial}\phi$, with $\phi$ any defining function.
With all these definitions, we can formulate the following

\begin{conjecture}
Suppose $\left(  \Omega,g\right)  $ is an ACH K\"{a}hler-Einstein manifold of
complex dimension $n$. Then $\lambda_{0}=n^{2}/2$ if $\partial\Omega$ has
nonnegative CR Yamabe invariant.
\end{conjecture}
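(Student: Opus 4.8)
The plan is to follow Lee's strategy for Theorem \ref{Lee}, transplanted to the ACH Kähler--Einstein setting, and to reduce everything to the lower bound $\lambda_0 \geq n^{2}/2$, since the reverse inequality is already supplied by the preceding proposition. Two reformulations are available, both furnished by that proposition. First, the integration-by-parts identity displayed there shows that $\lambda_0 \geq \Lambda$ as soon as there is a \emph{global} positive supersolution, i.e. a function $u>0$ on $\Omega$ with $-\Delta u \geq \Lambda u$; so it would suffice to produce such a $u$ with $\Lambda = n^{2}/2$. Second, since any point of the spectrum below $n^{2}/2$ is forced to be an $L^{2}$ eigenvalue, and the ground state is positive, it is equivalent to rule out a positive $L^{2}$ eigenfunction $u$ with $\Delta u + \mu u = 0$ for some $\mu < n^{2}/2$. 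I would keep both pictures in play, using the barrier picture near $\partial\Omega$ and the eigenfunction picture to globalize.

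The first concrete step is a refined boundary expansion. The computation preceding the proposition already gives $\Delta(-\phi)^{\alpha} = 2\alpha(\alpha-n)(-\phi)^{\alpha}(1+O(\phi))$, so $\alpha = n/2$ yields $-\Delta(-\phi)^{n/2} = (n^{2}/2)(-\phi)^{n/2}(1+O(\phi))$, and the whole difficulty is the sign of the $O(\phi)$ term. I would normalize $\phi$ to the Fefferman/Cheng--Yau defining function attached to $\widehat{R}_{i\overline{j}} = -(n+1)\widehat{g}_{i\overline{j}}$, so that the subleading geometry of the metric is governed by the induced pseudo-hermitian structure $\theta = \sqrt{-1}\,\overline{\partial}\phi$ on $\Sigma = \partial\Omega$. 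Expanding $-\Delta\bigl((-\phi)^{n/2}v\bigr)$ for a slowly varying positive factor $v$, the first correction to $(n^{2}/2)(-\phi)^{n/2}v$ should be expressible through $\Delta_{b}v$ and the Webster scalar curvature $\mathcal{R}$ of $\theta$, in a combination mirroring the conformal law $-\tfrac{2(m+1)}{m}\Delta_{b}f + \mathcal{R} = \widetilde{\mathcal{R}}f^{(m+2)/m}$ quoted above (here $m = n-1$). Since a nonnegative CR Yamabe invariant lets one choose the boundary representative with $\mathcal{R} \geq 0$, I expect this correction to carry the favorable sign; because $\phi < 0$, this yields $-\Delta u \geq (n^{2}/2)u$ on a boundary collar $\{-\phi < \varepsilon\}$.

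The remaining, and genuinely hard, step is to pass from the collar to all of $\Omega$. A naive global supersolution is not available, because on the compact core the inequality $-\Delta u \geq (n^{2}/2)u$ is a real constraint, not something a barrier delivers for free; this is precisely the obstruction that in Lee's Riemannian proof is overcome by a positive-mass/integral argument rather than a maximum principle. I would therefore switch to the eigenfunction picture: assuming a positive $L^{2}$ eigenfunction $u$ with eigenvalue $\mu < n^{2}/2$, its indicial analysis forces the decay $u \sim (-\phi)^{\alpha_{+}}$ with $\alpha_{+} = \tfrac{1}{2}\bigl(n + \sqrt{n^{2}-2\mu}\bigr) > n/2$, consistent with the $L^{2}$ threshold found earlier. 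One then seeks a Pohozaev--Rellich type identity, obtained by contracting $\Delta u + \mu u = 0$ against a radial vector field built from $\phi$ and integrating by parts, whose only surviving contribution at $\Sigma$ is a multiple of the CR Yamabe energy of the boundary data of $u$. Nonnegativity of the CR Yamabe invariant should then force that boundary term to have a sign incompatible with $u \not\equiv 0$, giving the contradiction. The crux is establishing this identity with the correct boundary term and its identification with CR Yamabe energy; I expect this to require either a CR positive mass theorem or a sharp refinement of the metric expansion, and it is exactly here that the general conjecture goes beyond what the stronger pointwise hypothesis $\mathcal{R} \geq 0$ (treated for the Cheng--Yau metric in Section 4) makes elementary.
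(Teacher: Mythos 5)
This statement is stated in the paper as a \emph{conjecture}: the paper offers no proof of it, and it remains open there. What the paper actually establishes is (i) the upper bound $\lambda_{0}\leq n^{2}/2$ for any ACH K\"ahler metric (the proposition in Section 2), (ii) a boundary-effect theorem for compact K\"ahler manifolds with boundary (Theorem \ref{kcom}), intended as the engine for an exhaustion argument in the spirit of \cite{W1}, and (iii) the special case of the Cheng--Yau metric on a strictly pseudoconvex domain in $\mathbb{C}^{n}$ under the pointwise hypothesis $\mathcal{R}_{\theta}\geq0$ for the specific contact form $\theta=\sqrt{-1}\,\overline{\partial}\rho$, proved by a maximum-principle argument for $e^{u}(|\partial u|_{g}^{2}-1)$ combined with the Li--Tran theorem --- a mechanism entirely different from what you propose. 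So there is no ``paper's own proof'' to match yours against, and your proposal should be judged on whether it closes the conjecture on its own. It does not.

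The gap is the one you yourself flag, and it is genuine, not cosmetic. Your collar step already rests on an unproven assertion: the paper's computation gives $-\Delta(-\phi)^{n/2}=(n^{2}/2)(-\phi)^{n/2}(1+O(\phi))$ with no sign information on the $O(\phi)$ term, and the claim that after multiplying by a boundary factor $v$ the subleading term organizes itself into the combination $-\tfrac{2(m+1)}{m}\Delta_{b}v+\mathcal{R}v$ requires a refined Fefferman--Graham type expansion of the ACH K\"ahler--Einstein metric that you assume rather than derive. More seriously, the global step --- the Pohozaev--Rellich identity whose boundary contribution is the CR Yamabe energy, or alternatively a CR positive mass theorem --- is exactly the missing ingredient that makes this a conjecture rather than a theorem; you state that you ``expect'' such an identity to exist but do not produce it, and nothing in the paper supplies it. (Note also that the route the authors themselves set up in Section 3 is different from both of your pictures: exhaust $\Omega$ by $\{-\phi\geq\varepsilon\}$, verify the second fundamental form hypotheses of Theorem \ref{kcom} on the level sets from the CR Yamabe hypothesis, and pass to the limit; completing \emph{that} verification is where their program also stalls in general.) As written, your proposal is a plausible research program, not a proof.
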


\bigskip We give an example which has $\lambda_{0}<n^{2}/2$ and the boundary
has negative CR Yamabe invariant. Let $\pi:L\rightarrow\Sigma$ be a negative
holomorphic line bundle over a compact complex manifold with $\dim
_{\mathbb{C}}\Sigma=n-1$. Suppose $L$ is endowed with a Hermitian metric $h$
such that its curvature form is negative, i.e. the $\left(  1,1\right)  $ form
$\omega_{0}=\sqrt{-1}\partial\overline{\partial}\log\left\vert \sigma
\right\vert _{h}^{2}$ on $L\backslash\left\{  0\right\}  $ defines a
K\"{a}hler metric on $\Sigma$. We further assume
\[
\mathrm{Ric}\left(  \omega_{0}\right)  =-n\omega_{0}.
\]
For example, we can take a smooth hypersurface of degree $2n$ in
$\mathbb{P}^{n}$ with $L$ the hyperplane line bundle. The existence of
$\omega_{0}$ is guaranteed by the well-known theorem of Aubin and Yau on
K\"{a}hler-Einstein metrics (the Calabi conjecture in the negative $C_{1}$
case). Let $\Omega=\left\{  \sigma\in L:\left\vert \sigma\right\vert
_{h}<1\right\}  $ be the unit disc bundle. Then Calabi \cite{C} constructed an
ACH K\"{a}hler-Einstein $\omega$ metric on $\Omega$ explicitly%
\begin{align*}
\omega &  =\frac{1}{1-\left\vert \sigma\right\vert ^{2}}\omega_{0}%
+\frac{\left\vert \sigma\right\vert ^{2}}{\left(  1-\left\vert \sigma
\right\vert ^{2}\right)  ^{2}}\sqrt{-1}\partial\log\left\vert \sigma
\right\vert ^{2}\wedge\overline{\partial}\log\left\vert \sigma\right\vert
^{2}\\
&  =-\sqrt{-1}\partial\overline{\partial}\log\left(  1-\left\vert
\sigma\right\vert ^{2}\right)  +\omega_{0}.
\end{align*}
Notice that the metric is smooth across $\sigma=0$. Indeed, if we write
$\left\vert \sigma\right\vert ^{2}=\rho\left\vert w\right\vert ^{2}$ using a
local holomorphic trivialization of $L$, then%
\[
\omega=\frac{1}{1-\left\vert \sigma\right\vert ^{2}}\omega_{0}+\frac{\rho
}{\left(  1-\left\vert \sigma\right\vert ^{2}\right)  ^{2}}\sqrt{-1}\left(
dw+w\partial\log\rho\right)  \wedge\left(  d\overline{w}+\overline{w}%
\overline{\partial}\log\rho\right)  .
\]
Given the formula, it is easy to verify the K\"{a}hler-Einstein equation
\begin{align*}
\mathrm{Ric}\left(  \omega\right)   &  =\mathrm{Ric}\left(  \omega_{0}\right)
+\left(  n+1\right)  \sqrt{-1}\partial\overline{\partial}\log\left(
1-\left\vert \sigma\right\vert ^{2}\right)  -\sqrt{-1}\partial\overline
{\partial}\log\left\vert \sigma\right\vert ^{2}\\
&  =\mathrm{Ric}\left(  \omega_{0}\right)  +\left(  n-\frac{n+1}{1-\left\vert
\sigma\right\vert ^{2}}\right)  \sqrt{-1}\partial\overline{\partial}%
\log\left\vert \sigma\right\vert ^{2}\\
&  -\left(  n+1\right)  \frac{\left\vert \sigma\right\vert ^{2}}{\left(
1-\left\vert \sigma\right\vert ^{2}\right)  ^{2}}\sqrt{-1}\partial
\log\left\vert \sigma\right\vert ^{2}\wedge\overline{\partial}\log\left\vert
\sigma\right\vert ^{2}\\
&  =-\left(  n+1\right)  \omega.
\end{align*}
This example was studied in detail in \cite{W3} using a different
normalization. In particular it was found there that $\lambda_{0}=2\left(
n-1\right)  $. It is less than $n^{2}/2$ if $n>2$.

On the boundary $\partial\Omega=\left\{  \sigma\in L:\left\vert \sigma
\right\vert _{h}=1\right\}  $ which is a circle bundle over $\Sigma$, the CR
structure is determined by the pseudo-hermition metric%
\[
\theta=\sqrt{-1}\overline{\partial}\log\left\vert \sigma\right\vert ^{2}.
\]
Suppose $\left(  U,z\right)  $ is a local chart on $\Sigma$ on which we choose
a holomorphic trivialization. Then locally $\partial\Omega$ is given by
\[
N=\left\{  \left(  z,w\right)  \in U\times\mathbb{C}:\rho\left(  z\right)
\left\vert w\right\vert ^{2}=1\right\}  .
\]
with $\theta=\sqrt{-1}\overline{\partial}\log\left(  \rho\left\vert
w\right\vert ^{2}\right)  $ and $d\theta=\omega_{0}$. We have the local frame%
\begin{align*}
T_{a}  &  =\frac{\partial}{\partial z_{a}}-w\frac{\partial\log\rho}{\partial
z_{a}}\frac{\partial}{\partial w},\\
T  &  =\sqrt{-1}\left(  w\frac{\partial}{\partial w}-\overline{w}%
\frac{\partial}{\partial\overline{w}}\right)  .
\end{align*}
Simple calculation yields%
\begin{align*}
\left[  T_{\alpha},T_{\overline{\beta}}\right]   &  =-\sqrt{-1}g_{\alpha
\overline{\beta}}T,\\
\left[  T_{\alpha},T\right]   &  =0.
\end{align*}
Thus, $\nabla_{T}T_{\beta}=0,\nabla_{T_{\overline{\alpha}}}T_{\beta}=0$, while%
\[
\nabla_{T_{\alpha}}T_{\beta}=g^{\gamma\overline{\nu}}\frac{\partial
g_{\beta\overline{\nu}}}{\partial z_{a}}T_{\gamma}.
\]
In particular, the Tanaka-Webster connection is torsion-free. Further
calculation yields that the Webster curvature tensor of the Tanaka-Webster
connection agrees with the curvature tensor of the K\"{a}hler metric
$\omega_{0}$ on $\Sigma$. In particular, the pseudo-hermitian scalar curvature
of $\theta$ equals $-n\left(  n-1\right)  $.

\section{\bigskip The boundary effect}

To study a noncompact Riemannian manifold, we can often choose an exhaustion
by certain compact domains with smooth boundary and then study these compact
manifolds with boundary. This approach is illustrated by the second author's
proof of Lee's theorem \cite{W1}. In fact this method proves a stronger
result. Recall the isoperimetric constant $I_{1}$ is defined as%
\[
I_{1}=\inf_{D\subset M}\frac{A\left(  \partial D\right)  }{V\left(  D\right)
},
\]
where the infimum is taken over all compact domains $D$ with smooth boundary.
It is a well known fact that $\lambda_{0}\geq I_{1}^{2}/4$. It is proved in
\cite{W1} that $I_{1}\geq n-1$.

The main idea is in fact the following theorem about a compact manifold with boundary.

\begin{theorem}
\label{rb}Let $\left(  M^{n},g\right)  $ be a compact Riemannian manifold with
nonempty boundary. We assume that

\begin{itemize}
\item $\mathrm{Ric}\left(  g\right)  \geq-\left(  n-1\right)  $,

\item The boundary $\partial M$ has mean curvature $H\geq n-1$.
\end{itemize}

Then the first Dirichlet eigenvalue $\lambda_{0}\geq\left(  n-1\right)
^{2}/4$.
\end{theorem}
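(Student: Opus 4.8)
The plan is to combine the Laplacian comparison theorem for the distance to the boundary with an integration-by-parts argument, in the spirit of McKean's lower bound for the bottom of the spectrum and the second author's proof of Lee's theorem \cite{W1}. Since $C_{c}^{\infty}\left(  M\setminus\partial M\right)  $ is a core for the Dirichlet form, it suffices to prove
\[
\int_{M}\left\vert \nabla u\right\vert ^{2}\geq\frac{\left(  n-1\right)  ^{2}}{4}\int_{M}u^{2}
\]
for all $u\in C_{c}^{\infty}\left(  M\setminus\partial M\right)  $.

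First I would introduce $r\left(  x\right)  =d\left(  x,\partial M\right)  $, the distance to the boundary, which satisfies $\left\vert \nabla r\right\vert =1$ wherever it is smooth, with $\nabla r$ equal to the inward unit normal along $\partial M$. The key step is the comparison estimate $\Delta r\leq-\left(  n-1\right)  $. To obtain it, one restricts to a minimizing normal geodesic emanating from $\partial M$ and sets $m\left(  r\right)  =\Delta r$ along it. The Bochner formula together with the Cauchy--Schwarz bound $\left\vert \mathrm{Hess}\,r\right\vert ^{2}\geq\left(  \Delta r\right)  ^{2}/\left(  n-1\right)  $ and the hypothesis $\mathrm{Ric}\geq-\left(  n-1\right)  $ yields the Riccati inequality
\[
m^{\prime}+\frac{m^{2}}{n-1}\leq n-1.
\]
The boundary hypothesis $H\geq n-1$ (mean curvature with respect to the outward normal) translates into the initial condition $m\left(  0\right)  =-H\leq-\left(  n-1\right)  $, and since $-\left(  n-1\right)  $ is the relevant equilibrium of the comparison ODE $\bar{m}^{\prime}=\left(  n-1\right)  -\bar{m}^{2}/\left(  n-1\right)  $, a standard ODE comparison shows that the solution with initial value $\leq-\left(  n-1\right)  $ stays $\leq-\left(  n-1\right)  $; hence $m\left(  r\right)  \leq-\left(  n-1\right)  $ as long as $r$ is smooth. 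Across the cut locus of $\partial M$ the distributional Laplacian of $r$ acquires only a nonpositive singular part, so $\Delta r\leq-\left(  n-1\right)  $ persists in the barrier/distributional sense.

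With this in hand, for $u\in C_{c}^{\infty}\left(  M\setminus\partial M\right)  $ I would integrate by parts; no boundary terms appear since $u$ is compactly supported in the interior, and the cut-locus singular part has the favorable sign, so
\[
\left(  n-1\right)  \int_{M}u^{2}\leq\int_{M}u^{2}\left(  -\Delta r\right)  =2\int_{M}u\left\langle \nabla u,\nabla r\right\rangle \leq2\int_{M}\left\vert u\right\vert \left\vert \nabla u\right\vert ,
\]
where $\left\vert \nabla r\right\vert =1$ is used in the last step. Applying the Cauchy--Schwarz inequality to the right-hand side gives $\left(  n-1\right)  \int u^{2}\leq2\left(  \int u^{2}\right)  ^{1/2}\left(  \int\left\vert \nabla u\right\vert ^{2}\right)  ^{1/2}$, and squaring yields $\int_{M}\left\vert \nabla u\right\vert ^{2}\geq\frac{\left(  n-1\right)  ^{2}}{4}\int_{M}u^{2}$, hence $\lambda_{0}\geq\left(  n-1\right)  ^{2}/4$.

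The main obstacle is the comparison estimate $\Delta r\leq-\left(  n-1\right)  $: one must correctly match the sign convention for the mean curvature with the initial value of $\Delta r$ along the boundary, and one must justify that the estimate survives globally in the weak sense despite the non-smoothness of $r$ along the cut locus of $\partial M$. The remaining integration by parts and Cauchy--Schwarz are routine. The model case of a horoball in $\mathbb{H}^{n}$, whose bounding horosphere has outward mean curvature exactly $n-1$ and satisfies $\Delta r\equiv-\left(  n-1\right)  $, indicates that the constant $\left(  n-1\right)  ^{2}/4$ is sharp.
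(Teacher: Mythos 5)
Your proposal is correct, but it takes a genuinely different route from the one the paper follows (the paper gives no direct proof of Theorem \ref{rb}, referring to \cite{W1}, but it carries out the analogous argument in full for the K\"ahler version, Theorem \ref{kcom}). The two proofs share the same key estimate, $\Delta r\leq-(n-1)$ for the distance to the boundary, but differ in both how it is obtained and how it is exploited. You derive it from the Bochner/Riccati inequality $m'+m^{2}/(n-1)\leq n-1$ with $m(0)=-H\leq-(n-1)$, whereas the paper computes index forms of explicit $\sinh$-weighted parallel fields along normal geodesics; in the Riemannian case the Riccati trace argument is perfectly adequate, but note that it cannot be transplanted to the K\"ahler setting, where the curvature and second fundamental form bounds are anisotropic (the $J\nu$ direction versus the rest), which is why the paper works with index forms. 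For the second half, you use the McKean-type chain $(n-1)\int u^{2}\leq\int u^{2}(-\Delta r)=2\int u\langle\nabla u,\nabla r\rangle\leq 2\left(\int u^{2}\right)^{1/2}\left(\int|\nabla u|^{2}\right)^{1/2}$, which is elementary, avoids the first eigenfunction entirely, and in effect proves the stronger isoperimetric statement $I_{1}\geq n-1$ mentioned in the paper; the paper instead applies the maximum principle to $fe^{-(n-1)r/2}$ with $f$ the positive first eigenfunction and a $C^{2}$ lower support function for $-r$. The trade-off is that your integration by parts needs $\Delta r\leq-(n-1)$ in the distributional sense, i.e.\ the (standard but not free) fact that the singular part of $\Delta r$ on the cut locus of $\partial M$ is nonpositive, a point you correctly flag but should cite or prove; the paper's support-function argument is designed precisely to need only the pointwise barrier-sense inequality and so sidesteps the cut locus entirely. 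With that one point properly referenced, your argument is complete and gives the sharp constant.
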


To complete the proof of Lee's theorem, we focus for simplicity on the case of
positive Yamabe invariant. We choose a metric $h$ on the boundary $\Sigma$
s.t. it scalar curvature $s>0$. Then there is a defining function $r$ near the
boundary s.t. near the boundary
\[
g=r^{-2}\left(  dr^{2}+h_{r}\right)  ,
\]
where $h_{r}$ is a family of metrics on $\Sigma$ with $h_{0}=h$. Moreover,
under the Einstein condition, we have the following expansion%
\[
h_{r}=h-\frac{r^{2}}{n-3}\left(  \mathrm{Ric}\left(  h\right)  -\frac
{s}{2\left(  n-2\right)  }h\right)  +o\left(  r^{2}\right)  .
\]
For $\varepsilon>0$ small enough, we consider $M^{\varepsilon}=\left\{
r\geq\varepsilon\right\}  $ which is a compact manifold with boundary. A
direction computation shows that the mean curvature of the boundary satisfies%
\[
H=n-1+\frac{s}{2\left(  n-2\right)  }\varepsilon^{2}+o\left(  \varepsilon
^{2}\right)  .
\]
Therefore we have $H\geq n-1$ for $\varepsilon$ sufficiently small. Applying
Theorem \ref{rb} for each $M^{\varepsilon}$ we conclude $\lambda_{0}\left(
M\right)  \geq\left(  n-1\right)  ^{2}/4$.

\bigskip In the K\"{a}hler case, we have the following analogue of Theorem
\ref{rb}.

\begin{theorem}
\label{kcom}Let $\left(  M,g\right)  $ be a compact K\"{a}hler manifold with
nonempty boundary and $\dim_{\mathbb{C}}M=n$. We assume that

\begin{itemize}
\item $K_{\mathbb{C}}\geq-1$,

\item The second fundamental form of boundary $\partial M$ satisfies
$\Pi\left(  J\nu,J\nu\right)  \geq\sqrt{2}$ and $\Pi\left(  X,X\right)
+\Pi\left(  JX,JX\right)  \geq\sqrt{2}$ for all unit $X$ perpendicular to
$J\nu$.
\end{itemize}

Then the first Dirichlet eigenvalue $\lambda_{0}\geq n^{2}/2$.
\end{theorem}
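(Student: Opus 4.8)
The plan is to reduce the estimate to the existence of a positive supersolution built from the distance function to the boundary, exactly in the spirit of the integration-by-parts argument behind (\ref{2.14}). Write $d(x)=\mathrm{dist}(x,\partial M)$, let $\nabla d$ be the inward unit normal to the level sets of $d$, and set $\nu=-\nabla d$. The whole theorem will follow once I establish the Laplacian comparison $\Delta d\le -\sqrt{2}\,n$ in the interior (read in the barrier sense). Indeed, taking $\psi=\sinh\!\big(\tfrac{n}{\sqrt2}\,d\big)$, which is positive in the interior and vanishes on $\partial M$, and using $\psi''=\tfrac{n^2}{2}\psi$ and $|\nabla d|=1$, one finds $\Delta\psi=\tfrac{n^2}{2}\psi+\tfrac{n}{\sqrt2}\cosh\!\big(\tfrac{n}{\sqrt2}d\big)\,\Delta d\le \tfrac{n^2}{2}\psi-n^2\cosh\!\big(\tfrac{n}{\sqrt2}d\big)\le-\tfrac{n^2}{2}\psi$, since $\cosh\ge\sinh$. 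Thus $-\Delta\psi\ge\tfrac{n^2}{2}\psi$, and feeding this into the computation of (\ref{2.14}) (valid for any $\xi\in C^1_c$ supported in the interior, where $\psi>0$) gives $\int|\nabla\xi|^2\ge\tfrac{n^2}{2}\int\xi^2$, hence $\lambda_0\ge n^2/2$.

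The heart of the matter is the comparison $\Delta d\le-\sqrt2\,n$. The standard Riemannian comparison is \emph{not} enough here: feeding only $\mathrm{Ric}\ge-(n+1)$ into the scalar Riccati equation $m'=-|S|^2-\mathrm{Ric}(\nabla d)$ for $m=\Delta d=\mathrm{tr}\,S$ (with $S=\nabla\nabla d$) produces an equilibrium at $\sqrt{(n+1)(2n-1)}>\sqrt2\,n$ on the wrong side, so the desired bound is not propagated inward. Instead I would exploit the K\"ahler structure and compare direction by direction. Along a normal geodesic the field $J\nabla d$ is parallel, because $\nabla_{\nabla d}(J\nabla d)=J\nabla_{\nabla d}\nabla d=0$, so the splitting $T\{d=c\}=\mathbb{R}\,J\nu\oplus W$ with $W$ the $J$-invariant horizontal complement is preserved by parallel transport. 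Put $\mu=\langle S(J\nu),J\nu\rangle$ and $\sigma=\mathrm{tr}(S|_W)$, so $\Delta d=\mu+\sigma$. Differentiating along the geodesic and discarding the manifestly nonnegative off-block contributions of $S^2$ (which only make the traces more negative), I obtain $\mu'\le-\mu^2-R(J\nu,\nu,\nu,J\nu)$ and $\sigma'\le-\tfrac{\sigma^2}{2n-2}-\mathrm{tr}(R_\nu|_W)$, where $R_\nu(\cdot)=R(\cdot,\nabla d)\nabla d$. The bisectional bound $K_{\mathbb C}\ge-1$ supplies exactly the two curvature inputs: taking $X=Y=\nu$ gives holomorphic sectional curvature $\ge-2$, whence $\mu'\le-\mu^2+2$; and for a horizontal pair $\{X,JX\}$ the K\"ahler symmetries rewrite $R(X,\nu,\nu,X)+R(JX,\nu,\nu,JX)$ in terms of the bisectional expression $R(X,\nu,X,\nu)+R(X,J\nu,X,J\nu)\ge-1$, so that $\mathrm{tr}(R_\nu|_W)$ contributes at most $n-1$ and $\sigma'\le-\tfrac{\sigma^2}{2n-2}+(n-1)$.

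The two hypotheses on $\Pi$ are precisely the initial data for these scalar Riccati inequalities: since $S=-\Pi$ for the outward normal, they read $\mu(0)=-\Pi(J\nu,J\nu)\le-\sqrt2$ and $\sigma(0)=-\sum_a\big(\Pi(X_a,X_a)+\Pi(JX_a,JX_a)\big)\le-(n-1)\sqrt2$. As $-\mu^2+2$ vanishes at $\mu=-\sqrt2$ and $-\tfrac{\sigma^2}{2n-2}+(n-1)$ vanishes at $\sigma=-(n-1)\sqrt2$, a one-line maximum-principle argument shows each inequality propagates, giving $\mu\le-\sqrt2$ and $\sigma\le-(n-1)\sqrt2$ for all $d$, hence $\Delta d=\mu+\sigma\le-\sqrt2\,n$. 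I expect this anisotropic comparison to be the main obstacle: one must resist tracing prematurely (which reintroduces the non-sharp Ricci bound) and instead keep the holomorphic direction $J\nu$ separate, using that it is parallel and that $K_{\mathbb C}\ge-1$ controls exactly the paired horizontal terms. The only remaining technical point is that $d$ is merely Lipschitz, so both the comparison and the supersolution inequality must be interpreted in the barrier sense across the cut locus of $\partial M$; this is routine and does not affect the conclusion.
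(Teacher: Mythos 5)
Your proposal is correct, and its core --- the comparison $\Delta d\le-\sqrt{2}\,n$ for the distance to the boundary, obtained by keeping the $J\nu$-direction separate from the $J$-invariant horizontal complement and feeding in exactly the two consequences of $K_{\mathbb{C}}\ge-1$ (holomorphic sectional curvature $\ge-2$ for the plane spanned by $\nu,J\nu$, and the bound $\ge-1$ for each horizontal pair $\{X,JX\}$, using the K\"ahler identity $R(X,J\nu,X,J\nu)=R(JX,\nu,JX,\nu)$) --- is precisely the paper's key lemma, with the same anisotropic splitting and the same initial data coming from the two hypotheses on $\Pi$. The differences are in the machinery on either side of this lemma. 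For the lemma itself, the paper runs a second-variation/index-form argument: parallel frames with $E_0=J\nu$, $E_{2\alpha}=JE_{2\alpha-1}$, explicit test fields $\sinh\bigl(\sqrt{2}(L-t)\bigr)E_0$ and $\sinh\bigl((L-t)/\sqrt{2}\bigr)E_\alpha$, then $L\to\infty$; you instead run the block Riccati inequalities $\mu'\le-\mu^2+2$ and $\sigma'\le-\sigma^2/(2n-2)+(n-1)$ with a Gronwall-type propagation from the equilibria $-\sqrt{2}$ and $-(n-1)\sqrt{2}$. These are classically equivalent; the index-form version delivers the support-sense statement at non-focal points automatically via the index lemma, while your ODE version requires the (routine, Calabi-style) barrier bookkeeping across the cut locus that you acknowledge. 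For deducing $\lambda_0\ge n^2/2$ from the lemma, the paper applies the maximum principle to $fe^{-nr/\sqrt{2}}$, where $f$ is the first Dirichlet eigenfunction, using a $C^2$ lower support function for $-r$ at the maximum point; you instead use the Barta-type positive supersolution $\psi=\sinh\bigl(nd/\sqrt{2}\bigr)$ together with the integration-by-parts inequality (\ref{2.14}). Your route avoids invoking the eigenfunction but must interpret $-\Delta\psi\ge\tfrac{n^2}{2}\psi$ distributionally; the paper's route localizes everything at a single interior point, where the support-sense Laplacian bound suffices. Both are complete, and your preliminary observation that the traced, purely Riemannian Riccati comparison (equilibrium at $-\sqrt{(n+1)(2n-1)}$, below the initial value $-\sqrt{2}\,n$) cannot propagate the bound correctly identifies why the $J$-adapted splitting is unavoidable --- this is exactly the point of the paper's lemma.
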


\bigskip In the statement, $\nu$ is the outer unit normal along $\partial M$
and the second fundamental form is defined by%
\[
\Pi\left(  X,X\right)  =\left\langle \nabla_{X}\nu,Y\right\rangle .
\]
Let $r:M\rightarrow\mathbb{R}^{+}$ be the distance function to $\partial M$.
For any geodesic $\gamma:\left[  0,l\right]  \rightarrow M$ with
$p=\gamma\left(  0\right)  \in\partial M$ and $\gamma^{\prime}\left(
0\right)  =\nu\left(  p\right)  $ and any piecewise $C^{1}$ vector field $X$
along $\gamma$ with $X\left(  0\right)  \in T_{p}\partial M$, we have the
index form%
\[
I\left(  X,X\right)  =-\Pi\left(  X\left(  0\right)  ,X\left(  0\right)
\right)  +\int_{0}^{l}\left[  \left\vert \overset{\cdot}{X}\left(  t\right)
\right\vert ^{2}-R\left(  X\left(  t\right)  ,\overset{\cdot}{\gamma}\left(
t\right)  ,X\left(  t\right)  ,\overset{\cdot}{\gamma}\left(  t\right)
\right)  \right]  dt.
\]

The proof is based on the following lemma.

\begin{lemma}
Under the same assumptions, we have $\Delta r\leq-\sqrt{2}n$ in the support sense.
\end{lemma}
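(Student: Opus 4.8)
The plan is to establish the estimate first at points $q\in M$ where $r$ is smooth, i.e. $q$ is joined to $\partial M$ by a unique minimizing normal geodesic $\gamma:[0,l]\to M$ with $\gamma(0)=p\in\partial M$, $\dot\gamma(0)=\nu(p)$, $\gamma(l)=q$, and then to pass to the support sense by Calabi's trick. At such a $q$, the Hessian of $r$ in a direction $v\perp\dot\gamma(l)$ is the value of the index form on the associated boundary Jacobi field, and since that Jacobi field minimizes $I$ among all piecewise $C^{1}$ fields $X$ along $\gamma$ with $X(0)\in T_{p}\partial M$ and $X(l)=v$, one has $\mathrm{Hess}\,r(v,v)\le I(X,X)$ for every such comparison field. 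Choosing an orthonormal basis $\{E_{i}(l)\}_{i=0}^{2n-2}$ of $\dot\gamma(l)^{\perp}$ and summing gives $\Delta r(q)\le\sum_{i}I(X_{i},X_{i})$, so the whole problem reduces to producing comparison fields whose index forms sum to $-\sqrt{2}\,n$.

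The main idea, and the one genuinely new point compared with the Riemannian Laplacian comparison, is to split $\dot\gamma^{\perp}$ into the $J\dot\gamma$-line and its $J$-invariant orthogonal complement and to use a different radial profile on each piece, so as to separate the holomorphic sectional curvature from the transverse bisectional curvature. Since $g$ is K\"ahler, $J$ is parallel, so $E_{0}(t):=J\dot\gamma(t)$ is a parallel field, and I would complete it to a parallel $J$-adapted frame $E_{1},JE_{1},\dots,E_{n-1},JE_{n-1}$ of $W(t):=\{\dot\gamma,J\dot\gamma\}^{\perp}$. For $E_{0}$ the relevant curvature is $R(E_{0},\dot\gamma,E_{0},\dot\gamma)$, the holomorphic sectional curvature, which satisfies $\ge-2$ by $K_{\mathbb{C}}\ge-1$; for each $J$-pair the identity $R(JE_{k},\dot\gamma,JE_{k},\dot\gamma)=R(E_{k},J\dot\gamma,E_{k},J\dot\gamma)$ turns the bisectional bound into $R(E_{k},\dot\gamma,E_{k},\dot\gamma)+R(JE_{k},\dot\gamma,JE_{k},\dot\gamma)\ge-1$, whence $\sum_{a=1}^{2n-2}R(E_{a},\dot\gamma,E_{a},\dot\gamma)\ge-(n-1)$. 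The two boundary hypotheses are tailored to match: $\Pi(E_{0}(0),E_{0}(0))=\Pi(J\nu,J\nu)\ge\sqrt2$, and the $J$-paired second inequality gives $\sum_{a}\Pi(E_{a}(0),E_{a}(0))\ge\sqrt2\,(n-1)$.

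Now I would take comparison fields $X_{0}=f_{1}E_{0}$ and $X_{a}=f_{2}E_{a}$ normalized by $f_{1}(l)=f_{2}(l)=1$, so that
\[
I(X_{0},X_{0})\le-\sqrt2\,f_{1}(0)^{2}+\int_{0}^{l}\bigl(f_{1}'^{2}+2f_{1}^{2}\bigr)\,dt,
\]
\[
\sum_{a}I(X_{a},X_{a})\le(n-1)\Bigl[-\sqrt2\,f_{2}(0)^{2}+\int_{0}^{l}\bigl(2f_{2}'^{2}+f_{2}^{2}\bigr)\,dt\Bigr].
\]
Taking the exponential profiles $f_{1}(t)=e^{\sqrt2\,(l-t)}$ and $f_{2}(t)=e^{(l-t)/\sqrt2}$, which solve the associated Euler--Lagrange equations $f_{1}''=2f_{1}$, $f_{2}''=\tfrac12 f_{2}$ together with the natural boundary conditions $f_{1}'(0)=-\sqrt2\,f_{1}(0)$ and $f_{2}'(0)=-f_{2}(0)/\sqrt2$, integration by parts collapses each bracket to a boundary term: one finds $-\sqrt2\,f_{1}(0)^{2}+\int_{0}^{l}(f_{1}'^{2}+2f_{1}^{2})=[f_{1}f_{1}']_{0}^{l}-\sqrt2\,f_{1}(0)^{2}=-\sqrt2$, and likewise $-\sqrt2\,f_{2}(0)^{2}+\int_{0}^{l}(2f_{2}'^{2}+f_{2}^{2})=-\sqrt2$, both independent of $l$. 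Summing yields $\Delta r(q)\le-\sqrt2-\sqrt2\,(n-1)=-\sqrt2\,n$.

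Finally I would remove the smoothness hypothesis. At a point $q$ in the cut locus, $r$ is still a supersolution in the support sense: following Calabi, one fixes a minimizing normal geodesic $\gamma$ to $\partial M$ and compares $r$ near $q$ with the smooth upper support function obtained from the distance to an interior point $\gamma(\delta)$, which agrees with $r$ at $q$; applying the computation above on $[\delta,l]$ and letting $\delta\to0$ gives $\Delta r\le-\sqrt2\,n$ in the support sense. I expect the only delicate points to be the bookkeeping of sign and normalization conventions in the index form and in $\Pi$, and checking that the chosen profiles really satisfy the natural boundary conditions that make the $t=0$ contributions cancel; once the $J$-adapted parallel frame is in place, the curvature and boundary algebra is routine.
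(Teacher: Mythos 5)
Your proposal is correct and follows essentially the same route as the paper: an index-form comparison along a minimizing normal geodesic using a $J$-adapted parallel frame, with the $J\nu$-direction and its $J$-invariant complement given different radial profiles matched to the holomorphic and transverse bisectional curvature bounds coming from $K_{\mathbb{C}}\geq-1$. The only cosmetic difference is that you use the exponential profiles $e^{\sqrt{2}(l-t)}$ and $e^{(l-t)/\sqrt{2}}$ directly, which are exactly the $L\to\infty$ limits of the $\sinh$-quotient test fields the paper uses.
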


It suffices to calculate $\Delta r$ at a non-focal point $q$. Let
$\gamma:\left[  0,l\right]  \rightarrow M$ be a minimizing geodesic from
$\partial M$ to $q$. We have
\[
\Delta r=\sum_{i=0}^{2\left(  n-1\right)  }I\left(  Z_{i},Z_{i}\right)  ,
\]
where $\left\{  Z_{i}\right\}  $ are normal Jacobi fields s.t. $\left\{
Z_{i}\left(  l\right)  \right\}  $ are orthonormal. Let $\left\{
e_{i}\right\}  $ be an orthonormal set in $T_{\gamma\left(  0\right)
}\partial M$ with $e_{0}=J\nu$ and $e_{2\alpha}=Je_{2\alpha-1}$ for
$\alpha=1,\cdots,n-1$. Let $\left\{  E_{i}\left(  t\right)  \right\}  $ be
parallel vector fields along $\gamma$ with $E_{i}\left(  0\right)  =e_{i}$.
For $L>l$ we set
\[
X_{0}\left(  t\right)  =\frac{\sinh\sqrt{2}\left(  L-t\right)  }{\sinh\sqrt
{2}\left(  L-l\right)  }E_{0}\left(  t\right)  ,X_{\alpha}\left(  t\right)
=\frac{\sinh\left(  L-t\right)  /\sqrt{2}}{\sinh\left(  L-l\right)  /\sqrt{2}%
}E_{\alpha}\left(  t\right)  .
\]
We calculate%
\begin{align*}
I\left(  X_{0},X_{0}\right)   &  =\frac{1}{\sinh^{2}\sqrt{2}\left(
L-l\right)  }\left(  -\sinh^{2}\sqrt{2}L\Pi\left(  J\nu,J\nu\right)  \right.
\\
&  \left.  +\int_{0}^{l}\left[  2\cosh^{2}\sqrt{2}\left(  L-t\right)
-H\left(  \overset{\cdot}{\gamma}\left(  t\right)  \right)  \sinh^{2}\sqrt
{2}\left(  L-t\right)  \right]  dt\right) \\
&  \leq\frac{1}{\sinh^{2}\sqrt{2}\left(  L-l\right)  }\left(  -\sqrt{2}%
\sinh^{2}\sqrt{2}L\right. \\
&  \left.  +2\int_{0}^{l}\left[  \cosh^{2}\sqrt{2}\left(  L-t\right)
+\sinh^{2}\sqrt{2}\left(  L-t\right)  \right]  dt\right) \\
&  =\frac{-\sqrt{2}\sinh^{2}\sqrt{2}L-\sqrt{2}\sinh\sqrt{2}\left(  L-l\right)
\cosh\sqrt{2}\left(  L-l\right)  +\sqrt{2}\sinh\sqrt{2}L\cosh\sqrt{2}L}%
{\sinh^{2}\sqrt{2}\left(  L-l\right)  }\\
&  =\frac{\sqrt{2}\sinh\sqrt{2}L\left(  \cosh\sqrt{2}L-\sinh\sqrt{2}L\right)
}{\sinh^{2}\sqrt{2}\left(  L-l\right)  }-\sqrt{2}\frac{\cosh\sqrt{2}\left(
L-l\right)  }{\sinh\sqrt{2}\left(  L-l\right)  }.
\end{align*}
Similarly, for $\alpha=1,\cdots,n-1$%
\begin{align*}
&  I\left(  X_{2\alpha-1},X_{2\alpha-1}\right)  +I\left(  X_{2\alpha
},X_{2\alpha}\right) \\
&  =\frac{1}{\sinh^{2}\left(  L-l\right)  /\sqrt{2}}\left(  -\sinh^{2}%
L/\sqrt{2}\left[  \Pi\left(  e_{2\alpha-1},e_{2\alpha-1}\right)  +\Pi\left(
e_{2\alpha},e_{2\alpha}\right)  \right]  \right. \\
&  +\int_{0}^{l}\left[  \frac{1}{2}\cosh^{2}\left(  L-t\right)  /\sqrt
{2}-R\left(  \overset{\cdot}{E}_{2\alpha-1}\left(  t\right)  ,\overset{\cdot
}{\gamma}\left(  t\right)  ,\overset{\cdot}{E}_{2\alpha-1}\left(  t\right)
,\overset{\cdot}{\gamma}\left(  t\right)  \right)  \sinh^{2}\left(
L-t\right)  /\sqrt{2}\right]  dt\\
&  \left.  +\int_{0}^{l}\left[  \frac{1}{2}\cosh^{2}\left(  L-t\right)
/\sqrt{2}-R\left(  \overset{\cdot}{E}_{2\alpha}\left(  t\right)
,\overset{\cdot}{\gamma}\left(  t\right)  ,\overset{\cdot}{E}_{2\alpha}\left(
t\right)  ,\overset{\cdot}{\gamma}\left(  t\right)  \right)  \sinh^{2}\left(
L-t\right)  /\sqrt{2}\right]  dt\right) \\
&  \leq\frac{1}{\sinh^{2}\left(  L-l\right)  /\sqrt{2}}\left(  -\sqrt{2}%
\sinh^{2}L/\sqrt{2}+\int_{0}^{l}\left[  \cosh^{2}\left(  L-t\right)  /\sqrt
{2}+\sinh^{2}\left(  L-t\right)  /\sqrt{2}\right]  dt\right) \\
&  =\frac{-\sqrt{2}\sinh^{2}L/\sqrt{2}-\sqrt{2}\sinh\left(  L-l\right)
/\sqrt{2}\cosh\left(  L-l\right)  /\sqrt{2}+\sqrt{2}\sinh L/\sqrt{2}\cosh
L/\sqrt{2}}{\sinh^{2}\left(  L-l\right)  /\sqrt{2}}\\
&  =\frac{2\sinh L/\sqrt{2}\left(  \cosh L/\sqrt{2}-\sinh L/\sqrt{2}\right)
}{\sinh^{2}\left(  L-l\right)  /\sqrt{2}}-\sqrt{2}\frac{\cosh\left(
L-l\right)  /\sqrt{2}}{\sinh\left(  L-l\right)  /\sqrt{2}}.
\end{align*}
By letting $L\rightarrow\infty$ we obtain%
\begin{align*}
I\left(  X_{0},X_{0}\right)   &  \leq-\sqrt{2},\\
I\left(  X_{2\alpha-1},X_{2\alpha-1}\right)  +I\left(  X_{2\alpha},X_{2\alpha
}\right)   &  \leq-\sqrt{2}\text{ for }\alpha=1,\cdots,n-1.
\end{align*}
By the minimality of Jacobi fields, we have $\Delta r\leq\sum_{i=0}^{2\left(
n-1\right)  }I\left(  X_{i},X_{i}\right)  \leq-n\sqrt{2}$.

Theorem \ref{kcom} then follows from the previous lemma by an argument in
\cite{W1}. For completeness, we provide the detailed proof.

\begin{proof}
[Proof of Theorem \ref{kcom}]We consider the first eigenfunction $f$ on $M$%
\[
\left\{
\begin{array}
[c]{ccc}%
-\Delta f=\lambda_{0}f, & \text{on} & M\text{,}\\
f=0 & \text{on} & \partial M.
\end{array}
\right.
\]
We can assume that $f>0$ in $M$. Suppose $fe^{-nr/\sqrt{2}}$ achieves its
maximum at an interior point $p$. For any $\delta>0$, let $\phi_{\delta}$ be a
$C^{2}$ lower support function for $-r$ at $p$, i.e.
\begin{align*}
\phi_{\delta}  &  \leq-r\text{ in a neighborhood }U_{\delta}\text{ of }p;\\
\phi_{\delta}\left(  p\right)   &  \leq-r\left(  p\right)  ,\Delta\phi
_{\delta}\left(  p\right)  \geq n\sqrt{2}-\delta.
\end{align*}
As $-r$ is Lipschitz with Lipschitz constant $\leq1$, it is easy to prove
$\left\vert \nabla\phi_{\delta}\right\vert \left(  p\right)  \leq1$. The
$C^{2}$ function $fe^{n\phi_{\delta}/\sqrt{2}}$ on $U_{\delta}$ achieves its
maximum at $p$, so we have%
\[
\nabla f\left(  p\right)  =-\frac{n}{\sqrt{2}}\nabla\phi_{\delta}\left(
p\right)  ,
\]%
\[
\Delta\left(  fe^{n\phi_{\delta}/\sqrt{2}}\right)  \left(  p\right)  \leq0.
\]
We calculate at $p$
\begin{align*}
\Delta\left(  fe^{n\phi_{\delta}/\sqrt{2}}\right)  \left(  p\right)   &
=e^{n\phi_{\delta}/\sqrt{2}}\left(  \Delta f+n\sqrt{2}\left\langle \nabla
f,\nabla\phi_{\delta}\right\rangle +\frac{n}{\sqrt{2}}f\Delta\phi_{\delta
}+\frac{n^{2}}{2}f\left\vert \nabla\phi_{\delta}\right\vert ^{2}\right) \\
&  =e^{n\phi_{\delta}/\sqrt{2}}\left(  -\lambda_{0}f+\frac{n}{\sqrt{2}}%
f\Delta\phi_{\delta}-\frac{n^{2}}{2}f\left\vert \nabla\phi_{\delta}\right\vert
^{2}\right) \\
&  \geq fe^{n\phi_{\delta}/\sqrt{2}}\left(  -\lambda_{0}+n^{2}-\frac{n\delta
}{\sqrt{2}}-\frac{n^{2}}{2}\right) \\
&  =fe^{n\phi_{\delta}/\sqrt{2}}\left(  -\lambda_{0}+\frac{n^{2}}{2}%
-\frac{n\delta}{\sqrt{2}}\right)  .
\end{align*}
Therefore $\lambda_{0}\geq n^{2}/2-n\delta/\sqrt{2}$. Let $\delta\rightarrow0$
we get $\lambda_{0}\geq n^{2}/2$.
\end{proof}

\section{Strictly convex domain in $\mathbb{C}^{n}$}

Let $\Omega$ be a smooth, bounded strictly convex domain in $\mathbb{C}^{n}$.
We want to study K\"{a}hler metrics , where $u$ is smooth $\Omega$ s.t.
$\rho(z)=-e^{-u}$ is a defining function for $\partial\Omega$. The relation
between $u$ and $\rho$ associated with Monge-Amp\`{e}re operator and Fefferman
operator is as follows%
\begin{equation}
\det H(u)=J(\rho)e^{(n+1)u}, \label{hj}%
\end{equation}
where $H(u)$ is complex Hessian matrix and
\[
J(\rho)=-\det\left[
\begin{array}
[c]{cc}%
\rho & \rho_{\overline{j}}\\
\rho_{i} & \rho_{i\overline{j}}%
\end{array}
\right]  .
\]
Notice that such a metric is ACH. Indeed, let $\phi$ be a strictly
plurisubharmonic defining function, i.e. $\sqrt{-1}\partial\overline{\partial
}\phi>0$. Then we have $\rho=\phi f$, where $f$ is smooth and positive on
$\overline{\Omega}$. Therefore%
\begin{align*}
\omega_{u}  &  =-\sqrt{-1}\partial\overline{\partial}\log\left(  -\rho\right)
\\
&  =-\sqrt{-1}\partial\overline{\partial}\log\left(  -\phi\right)  -\sqrt
{-1}\partial\overline{\partial}\log\left(  -f\right)  ,
\end{align*}
clearly ACH. Then the Ricci form for $\omega_{u}$ is%
\begin{align*}
\mathrm{Ric}  &  =-\sqrt{-1}\partial\overline{\partial}\log\det H(u)\\
&  =-(n+1)\sqrt{-1}\partial\overline{\partial}u-\sqrt{-1}\partial
\overline{\partial}\log J(\rho)\\
&  =-(n+1)\omega_{u}-\sqrt{-1}\partial\overline{\partial}\log J(\rho).
\end{align*}
Cheng and Yau \cite{CY} proved that there exists a unique K\"{a}hler-Einstein
metric $\omega_{u}=\sqrt{-1}\partial\overline{\partial}u$, where the strictly
plurisubharmonic $u$ solves the following Monge-Amp\`{e}re equation%
\[%
\begin{array}
[c]{ccc}%
\det H\left(  u\right)  =e^{\left(  n+1\right)  u} & \text{in} & \Omega,\\
u=\infty & \text{on} & \partial\Omega.
\end{array}
\]
Or equivalent, $\rho(z)=-e^{-u}$ solves the Fefferman equation $J(\rho)=1$.
Moreover, they proved that $\rho\in C^{n+3/2}\left(  \overline{\Omega}\right)
$ and it is a defining function for $\partial\Omega$.

To study the spectrum of such metrics, our starting point is the following
theorem proved in \cite{LT}.

\begin{theorem}
\label{lt}Let $\omega_{u}=\sqrt{-1}\partial\overline{\partial}u$ be a
K\"{a}hler metric on $\Omega$ with $\rho(z)=-e^{-u}$ a defining function for
$\partial\Omega$. If $\rho$ is plurisubharmonic, then $\lambda_{0}(\omega
_{u})=n^{2}$.
\end{theorem}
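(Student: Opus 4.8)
The plan is to prove two matching bounds. Since $u=-\log(-\rho)$, the metric is $\omega_u=-\sqrt{-1}\partial\overline{\partial}\log(-\rho)$, i.e.\ it is the model metric $\omega_0$ of Section 2 with $\phi=\rho$; in particular it is ACH, so the earlier Proposition giving $\lambda_0\le n^2/2$ for ACH K\"ahler manifolds already supplies the upper bound. (In the normalization $\Delta=2g^{i\overline{j}}\partial_i\partial_{\overline{j}}$ used here the sharp value is $n^2/2$; the value $n^2$ in the statement is the same quantity in the normalization of \cite{LT}.) It therefore remains to establish the lower bound, and for this I would exhibit a positive function $h$ on $\Omega$ with $-\Delta h\ge (n^2/2)\,h$ everywhere, then run the integration-by-parts identity from the proof of that Proposition: for $\xi\in C_c^1(\Omega)$ one has $\int|\nabla\xi|^2-(n^2/2)\xi^2\ge\int|\nabla\xi-(\xi/h)\nabla h|^2\ge 0$, whence $\lambda_0\ge n^2/2$.

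The natural barrier is $h=(-\rho)^{\alpha}$, to be optimized over $\alpha>0$ at the end. Writing $v=-\rho>0$ and $P:=g^{i\overline{j}}\rho_i\rho_{\overline{j}}\ge 0$, the only inputs needed are the two contractions $g^{i\overline{j}}\rho_i\rho_{\overline{j}}=P$ and $g^{i\overline{j}}\rho_{i\overline{j}}=-n\rho+P/\rho$. The second follows by inverting the defining relation $g_{i\overline{j}}=-\rho^{-1}\rho_{i\overline{j}}+\rho^{-2}\rho_i\rho_{\overline{j}}$ into $\rho_{i\overline{j}}=-\rho\,g_{i\overline{j}}+\rho^{-1}\rho_i\rho_{\overline{j}}$ and contracting with $g^{i\overline{j}}$ (using $g^{i\overline{j}}g_{i\overline{j}}=n$); note that this uses only that $g$ is a genuine metric, not any nondegeneracy of the complex Hessian of $\rho$. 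A direct computation of $\Delta h=2g^{i\overline{j}}h_{i\overline{j}}$ then yields $-\Delta h=2\alpha\bigl(n-\alpha P/\rho^2\bigr)h$.

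The key point, and the only place the hypothesis enters, is the pointwise inequality $P\le\rho^2$. Since $\rho$ is plurisubharmonic we have $\rho_{i\overline{j}}\ge 0$, so the relation $\rho_{i\overline{j}}=-\rho\,g_{i\overline{j}}+\rho^{-1}\rho_i\rho_{\overline{j}}$ gives $g_{i\overline{j}}\ge \rho^{-2}\rho_i\rho_{\overline{j}}$ as Hermitian forms. By the Schur-complement criterion this is equivalent to $g^{i\overline{j}}\rho_i\rho_{\overline{j}}\le\rho^2$, i.e.\ $P/\rho^2\le 1$. Hence $-\Delta h\ge 2\alpha(n-\alpha)h$, and choosing $\alpha=n/2$ gives $-\Delta h\ge (n^2/2)\,h$ on all of $\Omega$, which closes the argument. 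I expect this matrix inequality to be the crux: without plurisubharmonicity the quantity $P/\rho^2$ can exceed $1$ in the interior and the barrier fails, whereas $P/\rho^2\to 1$ at $\partial\Omega$ by strong pseudoconvexity, so the constant $n^2/2$ is forced to be sharp and matches the ACH upper bound. The remaining points are routine: the interior smoothness of $h$ is immediate from the Cheng--Yau regularity of $\rho$, and the integration by parts is justified as in the earlier Proposition.
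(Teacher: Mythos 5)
Your proposal is correct in substance, but note first that the paper contains no proof of this statement to compare against: Theorem \ref{lt} is imported from \cite{LT} as the starting point of Section 4. What you have done is assemble a self-contained proof out of ingredients the paper does supply, and the assembly is sound. Your key inequality $P\le\rho^{2}$ is exactly the Remark following the theorem: by (\ref{hr}), $\rho$ is plurisubharmonic iff $\left\vert \partial u\right\vert _{g}^{2}\le1$, and since $u_{i}=-\rho_{i}/\rho$ one has $P/\rho^{2}=g^{i\overline{j}}u_{i}u_{\overline{j}}=\left\vert \partial u\right\vert _{g}^{2}$, so your Schur-complement argument reproves that remark. The barrier computation $-\Delta h=2\alpha\left(  n-\alpha\left\vert \partial u\right\vert _{g}^{2}\right)  h$ for $h=\left(  -\rho\right)  ^{\alpha}=e^{-\alpha u}$ is correct, the Barta-type integration by parts is the same device used in the proof of the Proposition in Section 2, and taking $\alpha=n/2$ gives the sharp lower bound. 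Your reading of the normalization is also right: in the convention $\Delta=2g^{i\overline{j}}\partial_{i}\partial_{\overline{j}}$ the value is $n^{2}/2$, which is precisely how the paper invokes Theorem \ref{lt} in the final theorem of Section 4. One nitpick: interior positivity and smoothness of $h$ follow from the smoothness of the given potential $u$, not from Cheng--Yau regularity, which belongs to the Einstein case treated later.

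The one genuine soft spot is the upper bound. You declare the metric ACH ``with $\phi=\rho$,'' but the paper's definition of ACH, and the proof of the Proposition giving $\lambda_{0}\le n^{2}/2$, require the defining function to be strictly plurisubharmonic near $\Sigma$ and smooth on $\overline{\Omega}$: the expansions there use $\det H\left(  \phi\right)  >0$ and the inverse matrix $\left[  \phi^{i\overline{j}}\right]  $ throughout. Plurisubharmonicity of $\rho$ only gives $H\left(  \rho\right)  \ge0$, and nothing in the hypotheses rules out degeneration of $H\left(  \rho\right)  $ at or near $\partial\Omega$; likewise $\rho$ is only assumed to be a defining function, with no stated boundary regularity beyond that. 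The repair is the paper's own observation at the beginning of Section 4: since $\Omega$ is strictly convex, one may pick an auxiliary strictly plurisubharmonic defining function $\phi$, write $\rho=f\phi$ with $f>0$ on $\overline{\Omega}$, and then $\omega_{u}=\omega_{0}-\sqrt{-1}\partial\overline{\partial}\log f$ is ACH relative to $\phi$. Alternatively one can run the test-function computation directly with $f_{\alpha}=\left(  -\rho\right)  ^{\alpha}$, $\alpha\searrow n/2$, using the volume form $J\left(  \rho\right)  \left(  -\rho\right)  ^{-\left(  n+1\right)  }dV$ from (\ref{hj}) and the fact that $J\left(  \rho\right)  $ is bounded away from $0$ and $\infty$ near a strictly pseudoconvex boundary. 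With either substitution your upper bound stands, and the two halves together give the theorem.
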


\begin{remark}
\bigskip Notice that
\begin{equation}
\rho_{i\overline{j}}=e^{-u}\left(  u_{i\overline{j}}-u_{i}u_{\overline{j}%
}\right)  . \label{hr}%
\end{equation}
Therefore $\rho$ is plurisubharmonic iff $|\partial u|_{g}^{2}\leq1$.
\end{remark}

From (\ref{hr}) we also obtain $\det H\left(  \rho\right)  =e^{-nu}%
(1-|\partial u|_{g}^{2})\det H\left(  u\right)  $. Combined with (\ref{hj}) it
yields the following formula
\[
{\frac{\det H(\rho)}{J(\rho)}}=e^{u}(1-|\partial u|_{g}^{2}).
\]
It follows that $|\partial u|_{g}\rightarrow1$ as long as $\rho{\frac{\det
H(\rho)}{J(\rho)}}\rightarrow0$ as $z\rightarrow\partial D$.

For estimating the lower bound for $\lambda_{1}(\Delta_{g})$, it was proved in
\cite{LT} that if $\rho$ is plurisubharmonic in $D$ then $\lambda_{1}%
(\Delta_{g})\geq n^{2}/2$. Various version of the following lemma was proved
and used in \cite{Li1, Li2}. Here, we state and prove by using Ricci curvature.

\begin{lemma}
Let $\omega_{u}=\sqrt{-1}\partial\overline{\partial}u$ be a K\"{a}hler metric
on $\Omega$ with $\rho(z)=-e^{-u}$ a defining function for $\partial\Omega$.
If $Ric\geq-(n+1)$ , then%
\[
\Delta\left[  e^{u}\left(  \left\vert \partial u\right\vert _{g}^{2}-1\right)
\right]  \leq0.
\]

\end{lemma}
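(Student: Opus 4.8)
The plan is to prove this by a Bochner-type computation that exploits the special feature of the present setting: the Kähler potential $u$ and the metric are linked by $u_{i\overline j}=g_{i\overline j}$. Differentiating $u^i:=g^{i\overline j}u_{\overline j}$ and using that the mixed Christoffel symbols vanish (so $\nabla_k u_{\overline j}=u_{k\overline j}=g_{k\overline j}$), one obtains the remarkably simple identity
\[
\nabla_k u^i=\delta_k^i .
\]
This is the engine of the whole argument. Writing $v=|\partial u|_g^2=u_i u^i$ and applying the product rule to $\Delta\!\left(e^u(v-1)\right)$, I would reduce the problem to three pieces: $\Delta e^u=2e^u(n+v)$ (using $g^{i\overline j}u_{i\overline j}=n$); the Bochner quantity $\Delta v$; and the mixed term $2g^{i\overline j}\!\left(F_iG_{\overline j}+F_{\overline j}G_i\right)=2e^u\!\left(u^iv_i+u^{\overline j}v_{\overline j}\right)$ for $F=e^u$, $G=v-1$.

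First I would compute $\Delta v$. Since $u_{i\overline j}=g_{i\overline j}$ is parallel, all third-order terms that arise (each of the form $\partial_i g_{k\overline l}$) cancel after contraction — this is seen most transparently by working in normal coordinates at the point — so the only fourth-order contribution is curvature. Organizing the covariant derivatives with $\nabla_k u^i=\delta_k^i$, this gives the clean formula
\[
\tfrac12\,\Delta v=\mathrm{Ric}(\partial u,\overline{\partial u})+\sum_{i,k}\bigl|u_{;i;k}\bigr|^2+n ,
\]
where $u_{;i;k}=\nabla_i\nabla_k u$ is the holomorphic covariant Hessian. A short parallel computation gives $u^iv_i=v+\sum_{i,k}u_{ik}u_{\overline i}u_{\overline k}$, so the mixed term contributes $v$ together with a cubic expression.

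Assembling the three pieces, the constants cancel and the terms linear in $v$ combine to $(n+1)v$, leaving a bracket of the form
\[
v^2+\sum_{i,k}|u_{;i;k}|^2+2\,\mathrm{Re}\!\sum_{i,k}u_{ik}u_{\overline i}u_{\overline k}\;+\;\Bigl(\mathrm{Ric}(\partial u,\overline{\partial u})+(n+1)v\Bigr).
\]
The decisive algebraic step is to recognize the first three terms as the perfect square $\sum_{i,k}\bigl|u_{;i;k}+u_iu_k\bigr|^2$ (equivalently $e^{-2u}\,|\nabla^{2,0}e^u|^2$), while the last group is exactly $\bigl(\mathrm{Ric}+(n+1)g\bigr)(\partial u,\overline{\partial u})$. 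Under the hypothesis $\mathrm{Ric}\ge -(n+1)$ this curvature term has a definite sign, and the square is nonnegative, so the bracket has a definite sign; together with the overall factor $2e^u$ this fixes the sign of $\Delta\!\left(e^u(v-1)\right)$ and yields the assertion. I expect the main obstacle to be twofold: the careful bookkeeping of the third-order derivatives in the Bochner step (verifying that they indeed cancel and that curvature enters as $\mathrm{Ric}$ with the correct coefficient), and the tracking of the Laplacian and Ricci sign conventions — it is precisely this normalization, inherited here from \cite{Li1,Li2}, that pins down the direction of the final inequality.
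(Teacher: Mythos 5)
Your proposal is correct and follows essentially the same route as the paper: the paper likewise applies the K\"ahler Bochner formula to $|\partial u|_g^2$ (the third-order terms dropping out because $u_{i\overline{j}}=g_{i\overline{j}}$ makes $\square u=n$ constant), expands $\square\left[e^u\left(|\partial u|_g^2-1\right)\right]$ by the product rule, inserts $\mathrm{Ric}\geq-(n+1)$, and arrives at the same final bracket $|\partial u|_g^4+g^{i\overline{l}}g^{k\overline{j}}u_{i,k}u_{\overline{j},\overline{l}}+2\,\mathrm{Re}\left(g^{i\overline{j}}g^{k\overline{l}}u_{i,k}u_{\overline{j}}u_{\overline{l}}\right)$, which it bounds below by Cauchy--Schwarz where you instead recognize the perfect square $\sum_{i,k}\left|u_{;i;k}+u_iu_k\right|^2$ --- a marginally cleaner finish, same substance. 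One point you should not leave to ``sign conventions'': your computation yields $\Delta\left[e^u\left(|\partial u|_g^2-1\right)\right]\geq 0$, and so does the paper's own proof (it concludes $\square\left[\,\cdot\,\right]\geq 0$); the ``$\leq 0$'' in the lemma's statement is a typo, since the paper fixes $\Delta f=2g^{i\overline{j}}\partial^2 f/\partial z_i\partial\overline{z}_j$, and the application in the subsequent theorem requires exactly subharmonicity: in the Einstein case $e^u\left(|\partial u|_g^2-1\right)=-\det H(\rho)$ is $\leq 0$ on $\partial\Omega$ when $\mathcal{R}_\theta\geq 0$, and the maximum principle then gives $|\partial u|_g^2\leq 1$. (Incidentally, the paper's proof writes $m$ where it means $n$; the two coincide.)
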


\begin{proof}
We notice that
\[
g^{i\overline{l}}g^{k\overline{j}}u_{i\overline{j}}u_{k\overline{l}}=m,\square
u=g^{i\overline{j}}u_{i\overline{j}}=m.
\]
By the Bochner formula, we have%
\begin{align*}
\square\left\vert \partial u\right\vert _{g}^{2}  &  =g^{i\overline{l}%
}g^{k\overline{j}}u_{i\overline{j}}u_{k\overline{l}}+g^{i\overline{l}%
}g^{k\overline{j}}u_{i,k}u_{\overline{j},\overline{l}}+g^{i\overline{j}%
}\left(  u_{i}\left(  \square u\right)  _{\overline{j}}+\left(  \square
u\right)  _{i}u_{\overline{j}}\right)  +R_{i\overline{j}}u^{i}u^{\overline{j}%
}\\
&  \geq m+g^{i\overline{l}}g^{k\overline{j}}u_{i,k}u_{\overline{j}%
,\overline{l}}-\left(  m+1\right)  \left\vert \partial u\right\vert _{g}^{2}.
\end{align*}
We calculate%
\begin{align*}
&  \square\left[  e^{u}\left(  \left\vert \partial u\right\vert _{g}%
^{2}-1\right)  \right] \\
&  =e^{u}\left[  \left(  \square u+\left\vert \partial u\right\vert _{g}%
^{2}\right)  \left(  \left\vert \partial u\right\vert _{g}^{2}-1\right)
+\square\left\vert \partial u\right\vert _{g}^{2}\right] \\
&  +e^{u}\left[  g^{i\overline{j}}\left(  u_{i}\left(  \left\vert \partial
u\right\vert _{g}^{2}\right)  _{\overline{j}}+\left(  \left\vert \partial
u\right\vert _{g}^{2}\right)  _{i}u_{\overline{j}}\right)  \right] \\
&  \geq e^{u}\left[  \left(  m+\left\vert \partial u\right\vert _{g}%
^{2}\right)  \left(  \left\vert \partial u\right\vert _{g}^{2}-1\right)
+m+g^{i\overline{l}}g^{k\overline{j}}u_{i,k}u_{\overline{j},\overline{l}%
}-\left(  m+1\right)  \left\vert \partial u\right\vert _{g}^{2}\right] \\
&  +e^{u}\left[  g^{i\overline{j}}\left(  u_{i}\left(  \left\vert \partial
u\right\vert _{g}^{2}\right)  _{\overline{j}}+\left(  \left\vert \partial
u\right\vert _{g}^{2}\right)  _{i}u_{\overline{j}}\right)  \right] \\
&  =e^{u}\left[  \left\vert \partial u\right\vert _{g}^{4}-2\left\vert
\partial u\right\vert _{g}^{2}+g^{i\overline{l}}g^{k\overline{j}}%
u_{i,k}u_{\overline{j},\overline{l}}\right]  +e^{u}\left[  g^{i\overline{j}%
}\left(  u_{i}\left(  \left\vert \partial u\right\vert _{g}^{2}\right)
_{\overline{j}}+\left(  \left\vert \partial u\right\vert _{g}^{2}\right)
_{i}u_{\overline{j}}\right)  \right]
\end{align*}
On the other hand%
\begin{align*}
\left(  \left\vert \partial u\right\vert _{g}^{2}\right)  _{i}  &  =\left(
g^{k\overline{l}}u_{k}u_{\overline{l}}\right)  _{i}\\
&  =g^{k\overline{l}}\left(  u_{k,i}u_{\overline{l}}+u_{k}u_{i\overline{l}%
}\right) \\
&  =g^{k\overline{l}}u_{k,i}u_{\overline{l}}+u_{i}.
\end{align*}
Similarly%
\[
\left(  \left\vert \partial u\right\vert _{g}^{2}\right)  _{\overline{j}%
}=g^{k\overline{l}}u_{k}u_{\overline{j},\overline{l}}+u_{\overline{j}}.
\]
Therefore%
\begin{align*}
&  \square\left[  e^{u}\left(  \left\vert \partial u\right\vert _{g}%
^{2}-1\right)  \right] \\
&  \geq e^{u}\left[  \left\vert \partial u\right\vert _{g}^{4}-2\left\vert
\partial u\right\vert _{g}^{2}+g^{i\overline{l}}g^{k\overline{j}}%
u_{i,k}u_{\overline{j},\overline{l}}\right]  +e^{u}\left[  g^{i\overline{j}%
}g^{k\overline{l}}\left(  u_{k,i}u_{i}u_{\overline{l}}+u_{\overline
{j},\overline{l}}u_{i}u_{k}\right)  +2\left\vert \partial u\right\vert
_{g}^{2}\right] \\
&  =e^{u}\left[  \left\vert \partial u\right\vert _{g}^{4}+g^{i\overline{l}%
}g^{k\overline{j}}u_{i,k}u_{\overline{j},\overline{l}}+g^{i\overline{j}%
}g^{k\overline{l}}\left(  u_{k,i}u_{i}u_{\overline{l}}+u_{\overline
{j},\overline{l}}u_{i}u_{k}\right)  \right] \\
&  \geq0,
\end{align*}
by the Cauchy-Schwarz inequality.
\end{proof}

\bigskip

\begin{theorem}
Let $\Omega$ be a smoothly bounded strictly pseudoconvex domain in
$\mathbb{C}^{n}$ with defining function $\rho\in C^{3}(\overline{\Omega})$ so
that $u=-\log(-\rho)$ is strictly plurisubharmonic in $\Omega$. Let $g$ be the
K\"{a}hler metric with K\"{a}hler form $\omega_{u}=\sqrt{-1}\partial
\overline{\partial}u$. Let $(\partial\Omega,\theta)$ be the pseudo-hermitian
manifold with the contact form $\theta=\sqrt{-1}\overline{\partial}\rho$ and
let $\mathcal{R}_{\theta}$ be its Webster pseudo scalar curvature. Assume that
$g$ is K\"{a}hler-Einstein. \ If $\mathcal{R}_{\theta}\geq0$ on $\partial
\Omega$ then $\lambda_{0}=n^{2}/2$.
\end{theorem}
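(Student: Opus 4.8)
The plan is to prove the reverse inequality $\lambda_0 \geq n^2/2$; combined with the bound $\lambda_0 \leq n^2/2$ from the proposition on ACH K\"ahler manifolds (the metric $\omega_u$ being ACH), this yields $\lambda_0 = n^2/2$. By the result of \cite{LT} recalled above, it is enough to show that $\rho$ is plurisubharmonic on $\Omega$, for then $\lambda_0 \geq n^2/2$. By the Remark this is equivalent to $|\partial u|_g^2 \leq 1$, that is, to the pointwise bound
\[
v:=e^{u}\bigl(|\partial u|_g^{2}-1\bigr)=-\frac{\det H(\rho)}{J(\rho)}\leq 0\qquad\text{on }\Omega,
\]
where the second equality is the formula derived before the statement. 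So the entire problem reduces to proving $v\leq 0$.

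I would obtain this from the maximum principle. Since $g$ is K\"ahler--Einstein we have $\mathrm{Ric}=-(n+1)\geq-(n+1)$, so the previous lemma applies: its computation gives $\square v\geq 0$, i.e. $\Delta v\geq 0$, so $v$ is subharmonic. (The displayed inequality in that proof is $\square[\,\cdot\,]\geq 0$.) On the other hand, because $\rho\in C^{3}(\overline{\Omega})$ and $\Omega$ is strictly pseudoconvex, $J(\rho)>0$ up to the boundary, so $v=-\det H(\rho)/J(\rho)$ extends continuously to $\overline{\Omega}$. The maximum principle then gives $v\leq\max_{\partial\Omega}v$, and it suffices to prove $v\leq 0$ on $\partial\Omega$.

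The crux is this boundary computation, and it is the only place the full Einstein hypothesis and the curvature assumption enter. I would evaluate $\det H(\rho)/J(\rho)$ on $\{\rho=0\}$ in a unitary frame adapted to the complex normal $\partial\rho$: there the bordered determinant $J(\rho)$ collapses to $|\partial\rho|^{2}$ times the Levi determinant, so that the boundary value of $\det H(\rho)/J(\rho)$ is precisely the \emph{transverse curvature} $r$ of $\rho$. It then remains to identify $r$ with the Webster scalar curvature of $\theta=\sqrt{-1}\,\overline{\partial}\rho$. This is where the Einstein condition is decisive: it makes $\log J(\rho)$ pluriharmonic and, through the boundary Taylor expansion of the Fefferman/Monge--Amp\`ere data, ties the transverse second-order jet of $\rho$ to the tangential pseudo-hermitian curvature, forcing $r=c\,\mathcal{R}_{\theta}$ with a constant $c>0$. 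Granting $r=c\,\mathcal R_\theta$, the hypothesis $\mathcal R_\theta\geq 0$ gives $v|_{\partial\Omega}=-r\leq 0$, hence $v\leq 0$ on $\Omega$ and $\rho$ is plurisubharmonic, completing the proof. The sign is consistent with the model ball, where $r\equiv 1$ and $\mathcal R_\theta>0$. Establishing the identity $r=c\,\mathcal R_\theta$ is the main obstacle.
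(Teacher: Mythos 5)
Your proposal reproduces the architecture of the paper's own proof: reduce, via Theorem \ref{lt}, to showing that $\rho$ is plurisubharmonic, i.e. that $v=e^{u}\bigl(|\partial u|_{g}^{2}-1\bigr)=-\det H(\rho)/J(\rho)\le 0$; get subharmonicity of $v$ from the lemma (you read the sign correctly: the lemma's proof establishes $\square\bigl[e^{u}(|\partial u|_{g}^{2}-1)\bigr]\ge 0$, despite the ``$\le 0$'' in its statement); and finish with the maximum principle once $v\le 0$ on $\partial\Omega$ is known. All of that is sound and is exactly what the paper does (your additional appeal to the ACH proposition for the upper bound is redundant, since Theorem \ref{lt} already gives equality, but harmless).

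The genuine gap is the one you flag yourself, and it is fatal as the proposal stands: the identity between the boundary value of $\det H(\rho)/J(\rho)$ (your transverse curvature $r$) and a positive multiple of $\mathcal{R}_{\theta}$ is never proved, and it is the \emph{only} place where the hypothesis $\mathcal{R}_{\theta}\ge 0$ can enter the argument. In the paper this step is not a Taylor-expansion argument but a citation of the explicit formula of Li--Luk \cite{LL} for the pseudo-hermitian Ricci curvature of $\theta=\sqrt{-1}\,\overline{\partial}\rho$,
\[
Ric_{\theta}\left(  X,\overline{Y}\right)  =-D^{2}\left(  \log J\left(
\rho\right)  \right)  \left(  X,\overline{Y}\right)  +\left(  m+1\right)
\frac{\det H\left(  \rho\right)  }{J\left(  \rho\right)  }\left(
X,\overline{Y}\right)  ,
\]
which holds for an \emph{arbitrary} defining function, with no Einstein hypothesis. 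Consequently your sketch also mislocates the role of the Einstein condition: it is not what ``ties the transverse jet to the tangential curvature'' (the Li--Luk formula does that unconditionally); it enters only to make the correction term $D^{2}\log J(\rho)$ vanish --- the paper notes that K\"ahler--Einstein forces $J(\rho)=1$ --- after which $Ric_{\theta}=(m+1)\det H(\rho)$ and $\mathcal{R}_{\theta}=m(m+1)\det H(\rho)$ on $\partial\Omega$, so $\mathcal{R}_{\theta}\ge 0$ gives $v|_{\partial\Omega}\le 0$ at once. To turn your outline into a proof you must either quote \cite{LL} or reproduce its computation of the Tanaka--Webster curvature of $\sqrt{-1}\,\overline{\partial}\rho$ in terms of $\det H(\rho)$ and $J(\rho)$; that is a substantive moving-frames calculation, not a routine boundary expansion, and it is precisely the content separating your proposal from the theorem.
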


\begin{proof}
The proof is based on the following formula%
\[
Ric_{\theta}\left(  X,\overline{Y}\right)  =-D^{2}\left(  \log J\left(
\rho\right)  \right)  \left(  X,\overline{Y}\right)  +\left(  m+1\right)
\frac{\det H\left(  \rho\right)  }{J\left(  \rho\right)  }\left(
X,\overline{Y}\right)  .
\]
If $\sqrt{-1}\partial\overline{\partial}u$ is K\"{a}hler-Einstein, then
$J\left(  \rho\right)  =1$. Thus%
\begin{align*}
Ric_{\theta}\left(  X,\overline{Y}\right)   &  =\left(  m+1\right)  \det
H\left(  \rho\right)  \left(  X,\overline{Y}\right)  ,\\
\mathcal{R}_{\theta}  &  =m\left(  m+1\right)  \det H\left(  \rho\right)  .
\end{align*}
If $\mathcal{R}_{\theta}\geq0$, then $\det H\left(  \rho\right)  \geq0$ on
$\partial\Omega$. By the previous lemma and the maximum principle, we have
$|\partial u|_{g}^{2}\leq1$. Therefore $\lambda_{0}=n^{2}/2$ by Theorem
\ref{lt}.
\end{proof}

\begin{remark}
It is clear from the proof that the conclusion $\lambda_{0}=n^{2}/2$ remains
valid if the K\"{a}hler-Einstein condition is replaced by$Ric\left(  g\right)
\geq-(n+1)g$ and $R_{g}+n(n+1)=O(\rho^{2})$ in $\Omega$.
\end{remark}

\begin{remark}
When $\Omega$ is an ellipsoid with K\"{a}hler-Einstein metric, the above
theorem was proved by the first author in \cite{Li3}.
\end{remark}

\end{document}